\newtheorem{thm}{Theorem}[section]
\newtheorem{lem}[thm]{Lemma}
\newtheorem{cor}[thm]{Corollary}
\renewcommand{\a}{\alpha}
\renewcommand{\d}{\delta}
\providecommand{\e}{\varepsilon}
\providecommand{\s}{\sigma}
\newcommand{\N}{\mathbb N}
\newcommand{\sL}{\mathscr{L}}
\newcommand{\sF}{\mathscr{F}}
\DeclareMathOperator{\join}{Join}
\DeclareMathOperator{\pad}{Pad}
\newcommand{\Mer}[1]{\langle #1 \rangle_{M}}
\newcommand{\Mu}[1]{\langle #1 \rangle_{\mu}}
\newcommand{\proj}{P}
\title[Optimal Pebbling]{Optimal Pebbling of Complete Binary Trees and a
Meta-Fibonacci Sequence}
\author{Thomas M.\ Lewis \\ Fabian Salinas}
\begin{document}
\bibliographystyle{amsplain} 
\maketitle

\section{Introduction}\label{intro}
\subsection{Prologue}
In 1999, H.\ Fu and C.\ Shiue \cite{MR1771391} introduced
an algorithm to produce an optimal pebbling and the 
optimal pebbling number of a complete binary tree. 
In this paper, we present an alternative approach to 
this problem; our analysis reveals a curious connection
between the optimal pebbling number of a complete
binary tree and the Conolly-Fox sequence, 
a type of meta-Fibonacci sequence.

Let $G$ be a graph with vertex set $V$ and edge set $E$. A 
\emph{pebbling configuration} on $G$ is a function
$f : V \to \N \cup \{0\}$. For $v \in V$, we think
of $f(v)$ as the number of \emph{pebbles} at $v$
and $f(G) = \sum_{w \in V} f(w)$ as the number of pebbles
on $G$. For each positive integer $p$, let $\sF_p (G)$ denote the collection 
of pebbling configurations of $G$ containing $p$ pebbles. 

A \emph{pebbling move} on $G$ consists of removing two pebbles
from a vertex and placing a single pebble at an adjacent vertex.
In effect, to move a pebble, we must pay a pebble.
A configuration $f$ is said to \emph{pebble} $G$ provided that 
given any vertex $v$, there exists a sequence of pebbling moves
(possibly empty) that brings a pebble to $v$.

There are two numbers frequently associated with pebblings 
of a graph $G$: the \emph{pebbling number} of $G$ is
\[
\pi (G) = \min \{p : \text{$\forall f \in \sF_p (G)$, $f$ pebbles $G$}  \},
\]
and the \emph{optimal pebbling number} of $G$ is
\[
\pi^* (G) = \min \{p : \text{$\exists f \in \sF_p (G)$ such that $f$ pebbles $G$}  \}.
\]
A pebbling configuration $f \in \sF_p (G)$ for which $f(G) = \pi^* (G)$
is called an \emph{optimal pebbling} of $G$.
In this paper, we produce the optimal pebbling numbers 
and optimal pebbling configurations of complete binary trees.

A \emph{tree} is an undirected graph 
in which any two vertices are connected by exactly one path.
We adopt the following terminology regarding trees.
A \emph{rooted tree} is a tree with a distinguished
vertex, called the \emph{root} of the tree.
Let $T$ be a rooted tree with vertex set $V$, edge set $E$,
and root $r$. Given a vertex $v \in V$, the distance 
from $r$ to $v$, denoted by $d(r,v)$, 
is the number of edges in the path from $r$ to $v$. 
For each nonnegative integer $k$, 
the $k$th \emph{level} of $T$ is the set of vertices
$L_k = \{v \in V : d(r,v) = k \}.$
A \emph{leaf} of $T$ is a vertex with degree one.
The \emph{children} of a non-leaf vertex $v$ are the vertices
in the next highest level that are adjacent to $v$.
A \emph{complete binary tree} is a rooted
tree in which each non-leaf vertex has two children
and every leaf vertex is in the same level.\footnote{This 
definition of a complete binary tree is not
universally recognized. 
Our useage follows Cormen, et al.\ \cite{MR1848805}
and, notably, Fu and Shiue \cite{MR1771391}.
}
The \emph{height} of a complete binary tree is
the distance from the root to any leaf. Hereafter
$T^h$ denotes a complete binary tree of
height $h$.

To describe our results,
we first introduce a sequence of partial sums.
Given a list of lists $\sL_1, \ldots, \sL_i$, let
$\join[\sL_1,\ldots, \sL_i]$ be the list obtained by 
concatenating (in order) $\sL_1$ through $\sL_i$.
We define a list of numbers composed entirely of 1s and 5s.
We begin with $A_1 = \{5\}$. We define successive lists recursively:
for each $k \ge 2$, let
\[
A_k = \join[A_{k-1}, A_{k-1}, \{1\}].
\] 
Some examples of the lists $\{A_k\}$ are collected in Table \ref{ASequence}.

\begin{table}[htb]
\caption{The lists $A_1$ through $A_4$}
\begin{tabular}{r|l}\toprule
$n$	& $A_n$	\\	\hline
1	& $\{5\}$												\\
2	& $\{5, 5, 1\}$										\\
3	& $\{5, 5, 1, 5, 5, 1, 1\}$							\\
4	& $\{5, 5, 1, 5, 5, 1, 1, 5, 5, 1, 5, 5, 1, 1, 1\}$		\\ 
\bottomrule
\end{tabular}
\label{ASequence}
\end{table}

Let $A$ be the limit of this sequence of lists. 
Let $a_0=0$ and, for $n \ge 1$, let $a_n$ denote the $n$th element
of the list $A$. Let $\{s_n\}$ or $\{s(n)\}$ denote the sequence of partial sums
of $\{a_n\}$; see Table \ref{PartialSums}.

\begin{table}[htb]
\caption{Some terms of $\{a_n\}$ and $\{s_n\}$}
\begin{tabular}{rrrrrrrrrrrrrrrrr} \toprule
$n$		&0 	&1	&2	&3	&4	&5	&6	&7	&8	&9	&10	&11	&12	&13	&14	&15	 \\ \hline
$a_n$	&0	&5	&5 	&1 	&5	&5 	&1 	&1 	&5	&5 	&1 	&5	&5 	&1 	&1  &1 		\\ \hline
$s_n$ 	&0	&5	&10	&11	&16	&21	&22	&23	&28	&33	&34	&39	&44	&45	&46	&47		\\ 
\bottomrule
\end{tabular} 
\label{PartialSums}
\end{table}

\subsection{Results}
For each positive integer $h$, let
\[
k(h) = \max \{ k : s_k \le 2^h \}.
\]
The main result of our paper is that
$\pi^* (T^h) = 2^h - k(h)$. 
For example, since $s_8 \le 2^5$
and $s_9 > 2^5$, it follows that $k(5)=8$ and $\pi^* (T^5) = 24$.
Our presentation of this main result is divided into two parts, 
corresponding to upper and lower bounds for $\pi^* (T^h)$.
In Theorem \ref{MainUpperResult}, we assert that
$\pi^* (T^h) \le 2^h - k(h)$; the proof of this theorem, which 
can be found in \S\ref{upper}, consists of constructing an explicit 
pebbling configuration $f$ on $T^h$
satisfying $f(T^h) = 2^h - k(h)$.
In Theorem \ref{MainLowerResult}, we assert that 
$\pi^* (T^h) \ge 2^h - k(h)$; in the proof of this theorem,
which can be found in \S\ref{lower}, we show, by way of a simple necessary condition, that no configuration of fewer than $2^h - k(h)$ pebbles can pebble $T^h$.

In \S\ref{asymptotics}, we present an asymptotic expansion of $k(h)$ that
refines the work of Fu and Shiue in the concluding remarks of their paper.
We show that, as $h \to \infty$,
\begin{equation}\label{kExpansion}
k(h) = \frac{1}{3}(2^h) - \frac{1}{3}(h+2) 
	- \frac{1}{3} \alpha (h) \log_2 (h+2) + O(1).
\end{equation}
The function $\a (h)$, which appears in the third-order term
of the expansion, is bounded between $-1$ and $+1$ and satisfies 
$\liminf \a(h) = -1$ and $\limsup \a(h) = 1$. 
In effect, we show that the tree $T^h$ can be split into top and 
bottom layers. The bottom layer begins at a level 
asymptotic to $\log_2 (h/2+1)$. We demonstrate that 
our optimal pebbling configurations follow a strict pattern on the bottoms
of the trees but vary chaotically as a function of $h$ on their tops.
For example, some optimal configurations have empty tops, while others contain
pebbles at every level. The oscillatory nature of $\a (h)$ 
bears the imprint of this chaotic behavior.

Finally, in \S\ref{metafib}, 
we show that the sequence $\{s_n\}$ is related to the Conolly-Fox sequence,
a type of meta-Fibonacci sequence. The first two terms of the Conolly-Fox sequence are $c(1)= 1$ and $c(2)=2$.
Thereafter, for $n \ge 2$, 
the sequence satisfies the nested recurrence relation
\begin{equation}\label{ConollyFox}
c(n) = c(n - c(n-1)) + c(n-1 - c(n-2)).
\end{equation}
The Conolly-Fox sequence
is a variation of the Conolly sequence
(\href{https://oeis.org/A046699}{OEIS, A046699}) \cite{OEIS}.
The Conolly sequence has a different set of initial conditions,
but Nathan Fox has argued persuasively that the initial conditions
$c(1)=1$ and $c(2)=2$ are more natural \cite{Fox2019}.
We show that $s_n = 4 c_n + n.$

\subsection{Background and related work}
The paper of Fu and Shiue \cite{MR1771391} concerns
the optimal pebbling number of a complete $m$-ary tree,
and our results lean heavily on their work.
For an integer $m$, $m \ge 2$, a complete $m$-ary tree,
is a rooted tree in which each non-leaf vertex has $m$ children
and every leaf vertex is in the same level. 
In their work, an $m$-ary tree of height $h$ is denoted
by $T_m^h$. For $m \ge 3$, they show that $\pi^* (T^m_h) = 2^h$;
an optimal pebbling of $T_m^h$ consists of placing all of the
pebbles at the root.
For $m = 2$, among other things, 
Fu and Shiue produce an optimal pebbling configuration
of $T_2^h$ through an integer linear programming algorithm called OPCBT.

At this point, let us make a cursory comparison of our methods. 
Roughly speaking, OPCBT is a bottom-up algorithm: an optimal pebbling
configuration of $T^h$ (or $T_2^h$) is revealed in successive 
steps, starting from the leaves and terminating at the root. One aspect of this approach
is that $\pi^* (T^h)$ is not known until the algorithm terminates
and the configuration is examined.
By comparison, our method demonstrates that $\pi^* (T^h) = 2^h - k(h)$,
and, as we will see, an optimal configuration of $T^h$ is easily calculated 
from $k(h)$. It should be noted that these methods do not necessarily 
produce the same optimal configuration. For example,
the optimal configuration of $T^7$ produced by OPCBT 
places two pebbles at each of the nodes in levels 2, 3, and 5, and zero pebbles at
every remaining node. By comparison, an optimal configuration of $T^7$ produced by our method
places four pebbles at each node in level 1, two pebbles at each node 
in levels 3 and 5, and zero pebbles at every remaining node. 

While graph pebbling grew out of problems in 
combinatorial number theory and group theory, it was formally
introduced in its present form by Chung \cite{MR1018531} in her analysis
of the pebbling number of the hypercube. 
The optimal pebbling number of a graph was introduced later
by Pachter, Snevily, and Voxman \cite{MR1369255}.
The paper of Hurlbert \cite{MR3030615} is an excellent 
survey of graph pebbling.

The optimal pebbling numbers of some classes of graphs 
have been studied. For example, the optimal pebbling numbers 
have been determined
for caterpillars \cite{MR2499997}, 
the squares of paths and cycles \cite{MR3203678}, 
spindle graphs \cite{MR4031681},
staircase graphs \cite{MR3957918}, and 
grid graphs \cite{MR3489733, MR4090527}.
The optimal pebbling numbers have been studied for
products of graphs \cite{MR2829275}, 
graphs with a given diameter \cite{MR3991624},
and graphs with a given minimum degree \cite{MR3944614}.

In recent years, a variety of adaptations and analogs of optimal pebbling 
have emerged. For example, the optimal pebbling number of a graph 
has been extended in a variety of ways by restricting the capacity of the 
pebbling configuration or by placing 
additional requirements on the pebbling configuration; see, for example,
\cite{MR3612586, MR3956289, MR3944631, MR4092629}. 
Graph rubbling is a cognate of pebbling; readers who are interested
in graph rubbling should consult \cite{MR3944610, MR3900331, MR2528207, MR4114886}.

\section{Some essential lemmas}
\label{lemmas}

In this section, we present some essential properties of the 
sequence $\{s_n\}$. We begin by describing the $M$-expansion 
and $\mu$-expansion of a positive integer.

For each positive integer $i$, 
let $M_i = 2^i-1$ denote the $i$th Mersenne number.
Given a positive integer $n$, let 
$\ell = \max \{i : n \ge M_i\}$ and write $n = M_\ell + r$, where $0 \le r \le M_{\ell}$. If $r=0$,
then we stop and write $n = M_{\ell}$. If $r=M_{\ell}$, then we stop
and write $n=2 M_{\ell}$. If else, then we continue this process 
with $r$. In this way, we can write
\begin{equation}\label{M-expansion}
n = \e_1 M_1 +  \cdots + \e_\ell M_\ell,
\end{equation}
where $\e_i \in \{0,1,2\}$ for each $i \in [\ell]$, and
if $\e_j = 2$ for some $j \in [\ell]$, then $\e_i=0$ for 
all $i \in [j-1]$. We call the sum on the right side of equation (\ref{M-expansion})
the \emph{$M$-expansion} of $n$.
Let $\Mer{n} = \{\e_1, \ldots, \e_\ell\}$ denote the
coefficient list of the $M$-expansion of 
$n$; the entries of $\Mer{n}$ are called the \emph{$M$-digits} of $n$.
For example, $\Mer{47}=\{1,0,0,1,1\}$ and $\Mer{157} = \{0,0,0,2,0,0,1\}$.

The $\mu$-expansion of a positive integer $n$ is developed in parallel fashion.
For each positive integer $i$, let $\mu_i = 3M_i + 2 = 2^{i+1}+2^i - 1$.
If $n \le 4$, then we stop. If else, $n \ge 5$ and we let 
$\ell = \max \{i : n \ge \mu_i \}$ and write $n = \mu_\ell + r$, 
where $0 \le r \le \mu_{\ell}$. 
If $r=0$,
then we stop and write $n = \mu_{\ell}$. If $r=\mu_{\ell}$, then we stop
and write $n=2 \mu_{\ell}$. If else, then we continue this process 
with $r$. In this way, we can write
\begin{equation}\label{Mu-expansion}
n = r + \e_1 \mu_1 +  \cdots + \e_\ell \mu_\ell,
\end{equation}
where $r \in \{0,1,2,3,4\}$, $\e_i \in \{0,1,2\}$ for each $i \in [\ell]$,
and if $\e_j = 2$ for some $j \in [\ell]$, then $r=0$ and $\e_i=0$ for 
all $i \in [j-1]$. We call the sum on the right side of equation (\ref{Mu-expansion})
the \emph{$\mu$-expansion} of $n$. When $r=0$, 
we let $\Mu{n} = \{\e_1, \ldots, \e_\ell\}$ denote the
coefficient list of the $\mu$-expansion of 
$n$; the entries of $\Mu{n}$ are called the \emph{$\mu$-digits} of $n$.
For example, $409 = 3 + \mu_3 + \mu_7$, $140=2\mu_2 + \mu_3 + \mu_5$,
and $\Mu{140} = \{0,2,1,0,1\}$.

We use the following notation when working with lists.
Let $\ell$ be a positive integer and let $\sL = \{a_1, \ldots, a_{\ell}\}$
be a list of real numbers. The \emph{length} of $\sL$ is $\ell$. 
For $k \in [\ell]$, let $\proj_k (\sL) = a_k$. 
Let $\s(\sL) = a_1 + \cdots + a_{\ell}$ and, for $\ell >1$,
let $S(\sL) = \{a_2, \ldots, a_{\ell}\}$. In other words, $\s(\sL)$ 
is the sum of the entries of  $\sL$, and $S(\sL)$ is the left-shift of 
 $\sL$.
For example, since $\Mer{47}=\{1,0,0,1,1\}$, it follows that
$\proj_1 (\Mer{47}) = 1$, $\s(\Mer{47}) = 3$, and  $S(\Mer{47}) = \{0,0,1,1\}$.

\begin{lem}\label{ExpandS}
For each positive integer $n$,  
$\Mu{s_n} = \Mer{n}$.
\end{lem}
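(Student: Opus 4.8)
The plan is to prove $\Mu{s_n} = \Mer{n}$ by induction on $n$, exploiting the parallel self-similar structure of both the $M$-expansion (built from Mersenne numbers $M_i = 2^i - 1$) and the sequence $\{s_n\}$ (built from the lists $A_k$ via the recursion $A_k = \join[A_{k-1}, A_{k-1}, \{1\}]$). The essential bookkeeping identity to establish first is a relation between $M$-digits and $\mu$-digits at the level of the recursive blocks: since $\mu_i = 3 M_i + 2$, and since the list $A_k$ has length $M_{k+1}$ (a quick induction: $|A_k| = 2|A_{k-1}| + 1$, $|A_1| = 1$), I expect that $s(M_{k+1}) = s$ evaluated at the end of block $A_k$ equals exactly $\mu_{k+1}$ (or a closely related value). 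Concretely, I would first prove by induction that $s_{M_{k+1}} = \mu_{k+1}$ — equivalently $\sigma(A_k) = \mu_{k+1}$ — using $\sigma(A_k) = 2\sigma(A_{k-1}) + 1$ and the base case $\sigma(A_1) = 5 = \mu_1 = 3 \cdot 1 + 2$. This matches: $\mu_{k+1} = 2\mu_k + 1$ since $2(2^{k+1}+2^k - 1) + 1 = 2^{k+2}+2^{k+1} - 1 = \mu_{k+1}$. Good.

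Next I would set up the induction on $n$ itself. Given $n$, form its $M$-expansion via $\ell = \max\{i : n \ge M_i\}$ and $n = M_\ell + r$ with $0 \le r \le M_\ell$; there are three cases mirroring the definition ($r = 0$, $r = M_\ell$, and $0 < r < M_\ell$). In each case I want to show that $s_n$ has a corresponding $\mu$-expansion whose leading coefficient is $\e_\ell$ and whose remaining digits encode $s_r$ (or are trivial). The key structural fact is that because $A$ is the limit of the lists $A_k$ and $A_{k+1}$ begins with two copies of $A_k$, the first $M_\ell$ partial sums $s_1, \dots, s_{M_\ell}$ come from the first copy of $A_{\ell-1}$... — more carefully, positions $1$ through $M_\ell$ of $A$ agree with $A_\ell$, so $s_{M_\ell} = \sigma(A_\ell) = \mu_\ell$, and for $M_\ell < m \le 2M_\ell$ we have $s_m = s_{M_\ell} + (s_{m - M_\ell})$ because the second copy of $A_{\ell-1}$ inside $A_\ell$... here I need to be slightly careful: the relevant self-similarity is that positions $M_\ell + 1, \dots, 2M_\ell$ of $A$ replicate positions $1, \dots, M_\ell$. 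I would verify this directly from $A_{\ell+1} = \join[A_\ell, A_\ell, \{1\}]$ together with the fact that $A$ extends each $A_k$. Then $s_n = \mu_\ell + s_r$ when $n = M_\ell + r$ with $r < M_\ell$, and the induction hypothesis gives $\Mu{s_r} = \Mer{r}$; prepending the digit $1$ to each (as the $\ell$th digit) yields $\Mu{s_n} = \Mer{n}$. The boundary cases $r = 0$ (digit $1$, i.e. $n = M_\ell$, $s_n = \mu_\ell$) and $r = M_\ell$ (digit $2$, i.e. $n = 2M_\ell$, $s_n = 2\mu_\ell$) are handled by the "stop" clauses in both expansion definitions, and one checks $s_{2M_\ell} = 2\mu_\ell$ directly. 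The small base cases $n \le 4$ need separate verification: $\Mer{1} = \{1\}$, $\Mer{2} = \{2\}$, $\Mer{3} = \{0,1\}$, $\Mer{4} = \{1,1\}$, while $s_1 = 5, s_2 = 10, s_3 = 11, s_4 = 16$, and one checks these against the $\mu$-expansion's small-$n$ conventions — note $\mu$-expansion has the quirk that $n \le 4$ "stops" immediately, so I must make sure the correspondence is stated for $n \ge 5$ or that the list $\Mu{s_n}$ is well-defined (i.e. the remainder term $r$ in the $\mu$-expansion of $s_n$ vanishes), which it does since $s_1, \dots, s_4 \in \{5, 10, 11, 16\}$ are all $\ge 5$ and in fact have exact $\mu$-expansions with $r = 0$: e.g. $5 = \mu_1$, $10 = 2\mu_1$, $11 = 3 + ?$... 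I should check $11$: $\mu_2 = 11$, so $11 = \mu_2$, giving $\Mu{11} = \{0,1\} = \Mer{3}$. And $16 = \mu_1 + \mu_2 = 5 + 11$, so $\Mu{16} = \{1,1\} = \Mer{4}$.

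The main obstacle I anticipate is matching the somewhat delicate "stopping" and carrying conventions of the two expansions exactly — the constraint that a digit $2$ forces all lower digits to be $0$ (and forces $r = 0$ in the $\mu$-case). I need the induction to respect this: when $\e_\ell = 2$ appears in $\Mer{n}$, then $n = 2M_\ell$ and correspondingly $s_n = 2\mu_\ell$ with all lower $\mu$-digits zero and remainder zero, which is consistent. The bulk of the argument is then routine arithmetic with the identities $\mu_{k+1} = 2\mu_k + 1$, $M_{k+1} = 2M_k + 1$, $|A_k| = M_{k+1}$, $\sigma(A_k) = \mu_k$ — I would state these as preliminary observations and cite them freely. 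The one genuinely structural lemma, which I would isolate and prove first, is the self-similarity statement: for $1 \le m \le M_\ell$, the $(M_\ell + m)$th entry of $A$ equals the $m$th entry of $A$, hence $s_{M_\ell + m} - s_{M_\ell} = s_m$.
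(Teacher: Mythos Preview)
Your proposal is correct and follows essentially the same route as the paper: first establish $s_{M_k}=\mu_k$ by induction from the block recursion $A_{k+1}=\join[A_k,A_k,\{1\}]$ (hence $s_{M_{k+1}}=2s_{M_k}+1$), then for general $n$ write $n=M_\ell+r$, use the self-similarity of $A$ to get $s_n=\mu_\ell+s_r$, and recurse with the boundary cases $r=0$ and $r=M_\ell$ handled by the stopping clauses. One small indexing slip to clean up: $|A_k|=M_k$ (not $M_{k+1}$), so $\sigma(A_k)=s_{M_k}=\mu_k$---you use the correct version later, but your first statement of it is off by one.
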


\begin{proof} 
We begin by proving a provisional form of this theorem; namely, 
for each positive integer $k$, $s_{M_{k}} = \mu_k$. 
This is true for $k=1$ by inspection: $s_{M_1} = s_1 = 5 = \mu_1$.
Let $k$ be a positive integer. Recall that the list $A_{k+1}$ 
contains $M_{k+1}$ terms and has the form
\begin{equation}
\label{Alist}
A_{k+1} = \join [A_k, A_k, \{1\}].
\end{equation}
Thus $s_{M_{k+1}} = 2 s_{M_k} + 1$. By induction, it follows that
$s_{M_k} = \mu_k$.

Let $n$ be a positive integer and let $\ell = \max \{i : n \ge M_i\}$
Then $n = M_\ell + r$, where 
$0 \le r \le M_\ell.$ Referring once again to equation (\ref{Alist}),
we see that $s_{M_\ell+r}$ is the sum of the first $M_\ell+r$ terms in
 $A_{\ell+1}$, read left to right. Clearly this is the sum of the terms of  $A_\ell$ plus the first $r$ terms of list $A_\ell$, that is,
$s_n = s_{M_\ell} + s_r$. If $r=0$, then $n = M_\ell$ and $s_n = \mu_{\ell}$,
and if $r = M_\ell$, then $n = 2 M_{\ell}$ and $s_n = 2 \mu_{\ell}$.
In either case we are done. Otherwise $0 < r < M_\ell$ and 
we continue by developing the $\mu$-expansion of $s_r$ as above.
\end{proof}

\begin{lem}\label{SFormula}
For each positive integer $n$, 
$s_n = 3 n + 2 \s(\Mer{n}).$
\end{lem}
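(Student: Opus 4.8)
The plan is to derive this identity directly from Lemma~\ref{ExpandS} together with the defining relation $\mu_i = 3M_i + 2$. First I would fix the $M$-expansion of $n$, say $n = \e_1 M_1 + \cdots + \e_\ell M_\ell$, so that $\Mer{n} = \{\e_1,\ldots,\e_\ell\}$ and, by definition of $\s$, we have $\s(\Mer{n}) = \e_1 + \cdots + \e_\ell$. By Lemma~\ref{ExpandS}, $\Mu{s_n} = \Mer{n}$; in particular the list $\Mu{s_n}$ is well-defined, which (recalling that $\langle \cdot \rangle_\mu$ is only defined when the remainder term vanishes) forces $r = 0$ in the $\mu$-expansion of $s_n$. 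Hence $s_n = \e_1 \mu_1 + \cdots + \e_\ell \mu_\ell$, with the very same digit list $\{\e_1,\ldots,\e_\ell\}$.

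Then I would substitute $\mu_i = 3M_i + 2$ and regroup the sum:
\[
s_n = \sum_{i=1}^{\ell} \e_i\,(3M_i + 2) = 3\sum_{i=1}^{\ell} \e_i M_i + 2\sum_{i=1}^{\ell} \e_i = 3n + 2\,\s(\Mer{n}),
\]
where the last equality uses $n = \sum_{i=1}^{\ell} \e_i M_i$ and $\s(\Mer{n}) = \sum_{i=1}^{\ell}\e_i$. This is exactly the asserted formula.

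I do not expect a genuine obstacle here: the statement is essentially an algebraic corollary of Lemma~\ref{ExpandS}. The one point requiring a little care is the bookkeeping around the $\mu$-expansion remainder — one must observe that $\Mu{s_n}$ being defined at all (as Lemma~\ref{ExpandS} presupposes) already encodes that $s_n$ is a genuine $\{0,1,2\}$-combination of the $\mu_i$ with no leftover term, so the substitution step is legitimate; the edge cases $r=0$ (giving $n = M_\ell$, $s_n = \mu_\ell$) and $r = M_\ell$ (giving $n = 2M_\ell$, $s_n = 2\mu_\ell$) are consistent with the formula as well. If one preferred a self-contained argument that bypasses Lemma~\ref{ExpandS}, one could instead rerun the induction behind it: check $s_1 = 5 = 3(1)+2$, and propagate the formula using $s_{M_{k+1}} = 2 s_{M_k} + 1$ and $s_{M_\ell + r} = s_{M_\ell} + s_r$ from the proof of Lemma~\ref{ExpandS}. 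Invoking Lemma~\ref{ExpandS} directly is the cleaner route.
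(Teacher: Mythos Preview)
Your proof is correct and follows essentially the same route as the paper: invoke Lemma~\ref{ExpandS} to write $s_n = \e_1\mu_1 + \cdots + \e_\ell\mu_\ell$ with $\Mer{n} = \{\e_1,\ldots,\e_\ell\}$, substitute $\mu_i = 3M_i + 2$, and regroup. Your remark about the $\mu$-expansion remainder vanishing makes explicit a point the paper leaves implicit, but the argument is otherwise identical.
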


\begin{proof} In accord with Lemma \ref{ExpandS},
let $\Mu{s_n} = \Mer{n} = \{\e_1, \ldots, \e_\ell \}$.
Since $\mu_k = 3M_k + 2$ for each positive integer $k$, it follows that
\[
s_n = 3 (\e_1 M_1 + \cdots + \e_\ell M_\ell ) + 2 (\e_1 + \cdots + \e_\ell ) 
= 3 n + 2 \s(\Mer{n}),
\]
as was to be shown.
\end{proof}

Given an integer $n > 2$, the \emph{reduction} of $n$ is the
integer $r(n)$ satisfying $\Mer{r(n)}= S(\Mer{n})$. For example,
$r(40)=18$ since $\Mer{40} = \{2,0,1,0,1\}$ and $\Mer{18} =\{0,1,0,1\}$.
For completeness, we define $r(1) = r(2) = 0$.

\begin{lem}\label{SReductionFormula}
For each positive integer $k$, 
\[
\frac{s_k - \s(\Mer{k})}{2} - 2 \proj_1 (k)  = s_{r(k)}.
\]
\end{lem}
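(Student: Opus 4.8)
The plan is to reduce the identity to the closed formula of Lemma~\ref{SFormula} together with a single recursion relating consecutive terms of $\{\mu_i\}$. First I would write $\Mer{k} = \{\e_1, \ldots, \e_\ell\}$, so that $k = \sum_{i=1}^{\ell} \e_i M_i$, $\s(\Mer{k}) = \sum_{i=1}^{\ell} \e_i$, and $\proj_1(k) = \e_1$. When $k > 2$ (equivalently $\ell \ge 2$), the definition of the reduction gives $\Mer{r(k)} = \{\e_2, \ldots, \e_\ell\}$; when $k \in \{1,2\}$ (equivalently $\ell = 1$) one has $r(k) = 0$ and $s_{r(k)} = s_0 = 0$.

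The key observation is that $\mu_i - 1 = 2\mu_{i-1}$ for every $i \ge 2$, which is immediate from $\mu_i = 3M_i + 2$ and $M_i = 2M_{i-1}+1$; at the boundary one has instead $\mu_1 - 1 = 4$. I would also note that $\mu_i - 1 = 3M_i + 1$ is even, which is what guarantees that the left-hand side of the lemma is an integer.

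With these in hand the proof is a direct computation. Applying Lemma~\ref{SFormula},
\[
s_k - \s(\Mer{k}) = 3k + \s(\Mer{k}) = \sum_{i=1}^{\ell} \e_i\,(3M_i + 1) = \sum_{i=1}^{\ell} \e_i\,(\mu_i - 1).
\]
Dividing by $2$ and isolating the $i = 1$ term through $\mu_1 - 1 = 4$ and $\mu_i - 1 = 2\mu_{i-1}$ for $i \ge 2$ yields
\[
\frac{s_k - \s(\Mer{k})}{2} = 2\e_1 + \sum_{i=2}^{\ell} \e_i\,\mu_{i-1}.
\]
Re-indexing and applying Lemma~\ref{SFormula} once more, this time to $r(k)$,
\[
\sum_{i=2}^{\ell} \e_i\,\mu_{i-1} = \sum_{j=1}^{\ell-1} \e_{j+1}\,\mu_j = 3\,r(k) + 2\,\s(\Mer{r(k)}) = s_{r(k)},
\]
using $\Mer{r(k)} = \{\e_2,\ldots,\e_\ell\}$ when $\ell \ge 2$, and the convention $s_0 = 0$ when $\ell = 1$ (in which case all three sums are empty). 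Subtracting $2\e_1 = 2\proj_1(k)$ from both sides of the previous display gives the lemma.

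I do not expect a genuine obstacle here: the content is essentially that the reduction $r$ on integers corresponds, at the level of the sequence $\{s_n\}$, to the left-shift $S$ on $M$-digit lists, once one corrects for the leading digit $\e_1$. The only points that need care are the index shift from $\Mer{k}$ to $\Mer{r(k)}$ and checking that the $i=1$ term of $\tfrac12\sum_i \e_i(\mu_i-1)$ is exactly the quantity $2\proj_1(k)$ that must be subtracted off; the degenerate cases $k=1,2$ fall out of the same computation with empty sums.
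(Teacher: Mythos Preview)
Your argument is correct and follows essentially the same route as the paper: write $\Mer{k}=\{\e_1,\ldots,\e_\ell\}$, use the relation $\mu_i-1=2\mu_{i-1}$ (and $\mu_1-1=4$) to evaluate $\tfrac12\sum_i \e_i(\mu_i-1)$, and then identify the resulting shifted sum $\sum_{i\ge 2}\e_i\mu_{i-1}$ with $s_{r(k)}$. The only cosmetic difference is that the paper invokes Lemma~\ref{ExpandS} to write $s_k=\sum_i \e_i\mu_i$ directly, whereas you route through Lemma~\ref{SFormula} to reach the equivalent expression $s_k-\s(\Mer{k})=\sum_i\e_i(\mu_i-1)$; since Lemma~\ref{SFormula} is itself derived from Lemma~\ref{ExpandS}, the two computations are the same in substance.
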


\begin{proof}
Let $k$ be a positive integer and let $\Mer{k} = \{\e_1, \ldots, \e_\ell\}$.
Then $\s(\Mer{k}) = \e_1 + \cdots + \e_\ell$ and, by Lemma \ref{ExpandS}, $s_k = \e_1 \mu_1 + \cdots + \e_\ell \mu_\ell$. Thus 		
\[
(s_k - \s(\Mer{k}))/2 = 
	\e_1 (\mu_1-1)/2 + \e_2 (\mu_2-1)/2 + 
	\cdots + \e_\ell (\mu_\ell-1)/2.
\]
But $(\mu_1-1)/2 = 2$ and, for $j \in \{2,\ldots,\ell\}$,
$(\mu_j-1)/2 = \mu_{j-1}$; thus,
\[
(s_k - \s(\Mer{k}))/2 = 2 \e_1  + \e_2 \mu_1 + \cdots  
	+ \e_{\ell} \mu_{\ell-1}.
\]
Finally, since $\e_1 = \proj_1(k)$, we obtain
\[
\frac{s_k - \s(\Mer{k})}{2} - 2 \proj_1 (k) = 
	\e_2 \mu_1 + \cdots + \e_\ell \mu_{\ell-1} = s_{r(k)},
\]
as was to be shown.
\end{proof}

This corollary is a trivial but useful consequence
of Lemma \ref{SReductionFormula}.

\begin{cor}\label{ReduceUpperBound}
For each positive integer $k$, $s_k \ge 2 s_{r(k)}$.
\end{cor}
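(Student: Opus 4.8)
The plan is to derive this directly from Lemma \ref{SReductionFormula} by showing that the quantity subtracted from $s_k/2$ on the left-hand side is at most $s_k/2$; equivalently, that $s_k \ge \s(\Mer{k}) + 4\proj_1(k)$. Rewriting the identity of Lemma \ref{SReductionFormula} as
\[
s_{r(k)} = \frac{s_k}{2} - \frac{\s(\Mer{k})}{2} - 2\proj_1(k),
\]
the claim $s_k \ge 2 s_{r(k)}$ is equivalent to $0 \le \s(\Mer{k}) + 4\proj_1(k)$, which is immediate since every $M$-digit is a nonnegative integer, so both $\s(\Mer{k})$ and $\proj_1(k)$ are nonnegative. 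There is a minor bookkeeping point: Lemma \ref{SReductionFormula} as stated requires $r(k)$, hence implicitly $\Mer{k}$ and $S(\Mer{k})$, to be well-defined, which is fine for $k > 2$; for $k \in \{1,2\}$ one checks the inequality by hand using $s_1 = 5$, $s_2 = 10$, and the convention $r(1)=r(2)=0$, whence $s_{r(k)} = s_0 = 0$ and $s_k \ge 0$ trivially.

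So the key steps, in order, are: (i) invoke Lemma \ref{SReductionFormula} to express $s_{r(k)}$ in terms of $s_k$, $\s(\Mer{k})$, and $\proj_1(k)$; (ii) observe that $\s(\Mer{k}) \ge 0$ and $\proj_1(k) \ge 0$ because the $M$-digits lie in $\{0,1,2\}$; (iii) conclude $2 s_{r(k)} = s_k - \s(\Mer{k}) - 4\proj_1(k) \le s_k$; (iv) dispose of the edge cases $k=1,2$ separately using $s_0 = 0$. There is essentially no obstacle here — the corollary is, as the paper says, ``trivial'' — so the only thing to be careful about is matching the hypotheses of Lemma \ref{SReductionFormula} and not dividing by two in a way that leaves an off-by-something error; doubling the displayed identity and comparing term by term avoids any such slip.
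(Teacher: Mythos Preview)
Your proof is correct and is exactly the unpacking of the ``trivial consequence of Lemma~\ref{SReductionFormula}'' that the paper has in mind: doubling the identity gives $2s_{r(k)} = s_k - \s(\Mer{k}) - 4\proj_1(k) \le s_k$ since the $M$-digits are nonnegative. Your separate handling of $k\in\{1,2\}$ is harmless but unnecessary, as Lemma~\ref{SReductionFormula} is stated and holds for all positive integers $k$ (one checks directly that both sides equal $0$ when $k=1$ or $k=2$).
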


We close this section with the following lemma.

\begin{lem}\label{UpperBound}
Let $h$ and $k$ be a positive integers. 
If $s_k \le 2^h$, then $k < M_{h-1}$.
\end{lem}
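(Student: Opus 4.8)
The plan is to exploit the monotonicity of $\{s_n\}$ together with the value of $s$ at Mersenne indices. Since $a_n \in \{1,5\}$ for every $n \ge 1$, the partial sums $\{s_n\}$ form a strictly increasing sequence. Hence, to prove that $k < M_{h-1}$ whenever $s_k \le 2^h$, it suffices to show $s_{M_{h-1}} > 2^h$: any $k$ with $k \ge M_{h-1}$ would then satisfy $s_k \ge s_{M_{h-1}} > 2^h$, contradicting the hypothesis.

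For the key computation I would invoke the provisional identity established inside the proof of Lemma \ref{ExpandS}, namely $s_{M_j} = \mu_j$ for every positive integer $j$. When $h \ge 2$, this gives
\[
s_{M_{h-1}} = \mu_{h-1} = 2^{h} + 2^{h-1} - 1 > 2^h,
\]
because $2^{h-1} - 1 \ge 1$. The remaining case $h = 1$ is vacuous: since $s_1 = 5 > 2 = 2^1$, there is no positive integer $k$ with $s_k \le 2^1$, so the conclusion holds trivially.

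The argument presents no genuine obstacle; the only point demanding care is that the conclusion is the strict inequality $k < M_{h-1}$, not merely $k \le M_{h-1}$. One could instead start from Lemma \ref{SFormula}, which yields $s_k \ge 3k$ and hence $k \le 2^h/3 < 2^{h-1}$; but this only delivers $k \le M_{h-1}$, leaving the boundary value $k = M_{h-1}$ to be excluded by a separate computation of $s_{M_{h-1}}$. Routing the whole proof through the identity $s_{M_{h-1}} = \mu_{h-1}$ is cleaner and disposes of the strict inequality in a single step.
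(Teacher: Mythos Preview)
Your proof is correct and follows essentially the same route as the paper: prove the contrapositive by combining the strict monotonicity of $\{s_n\}$ with the identity $s_{M_{h-1}} = \mu_{h-1} > 2^h$. Your explicit treatment of the edge case $h=1$ (where $M_{h-1}=0$ and the hypothesis is vacuous) is a small refinement over the paper's version, which tacitly assumes $h\ge 2$.
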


\begin{proof} We prove the inverse. Since $s_{M_{h-1}} = \mu_{h-1} >  2^h$
and since $\{s_k\}$ is strictly increasing, it follows that
if $k \ge M_{h-1}$, then $s_k > 2^h$.\end{proof}

\section{The upper bound of $\pi^* (T^h)$}
\label{upper}

In this section, we give an explicit construction of 
a pebbling configuration $f$ on $T^h$ such that 
$f(T^h) = 2^h - k(h)$; see Theorem \ref{MainUpperResult}.

A pebbling configuration $f$ on $T^h$ is called \emph{symmetric}
provided that $f(v) = f(w)$ whenever the vertices $v$ and $w$ are in the same level. When $f$ is a symmetric pebbling of $T^h$, we write
$f = \{ f_0, f_1, \ldots, f_h\}$,
where $f_i$ is the number of pebbles at level $i$.
The number of pebbles at the root, $f_0$, is called the 
\emph{head} of $f$. In this section we consider
only symmetric pebbling configurations of $T^h$.
A pebbling configuration $f$ on $T^h$ is called \emph{even} 
provided that $f(v)$ is even for any non-root vertex $v$.

We think of the tree $T^h$ with root $\rho$
as composed of a left and right sub-tree; these
sub-trees are isomorphic to $T^{h-1}$ with roots 
labeled $\rho_L$ and $\rho_R$;
see Figure \ref{RecursiveTree}.

\begin{figure}[htb]
\includegraphics{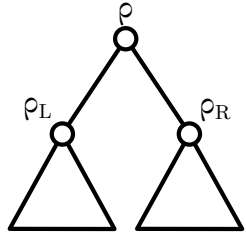}
\caption{The tree $T^h$ with root $\rho$; the left and right
sub-trees of $T^h$ have roots $\rho_L$ and $\rho_R$.}
\label{RecursiveTree}
\end{figure}

Let $f=\{f_0, \ldots, f_h\}$ be an even pebbling configuration on $T^h$.
Let $n(f) = f(T^h) = \sum_{i=0}^h 2^i f_i$ denote the number of pebbles
on $T^h$ and let $S(f) = \{f_1, f_2, \ldots, f_h\}$, the left-shift
of $f$. We can think of $S(f)$ as the pebbling configuration induced
by $f$ onto either the right or the left sub-trees of $T^h$.
Let $c(f) = \sum_{i=0}^h f_i$; this is 
the largest number of pebbles that can be amassed by $f$
at the root, $\rho$.

The \emph{reduction} of $f$, denoted by $r(f)$, is the
pebbling configuration on the left sub-tree obtained by 
transporting the maximum number of pebbles from the right sub-tree 
and the the root of $T^h$ onto the root of the left sub-tree, $\rho_L$.
Thus $r(f)= \{f_0', f_2, \cdots, f_h\},$
where
\begin{equation}\label{Head}
f_0' = f_1 + \left\lfloor \frac{f_0 + c(S(f))/2}{2} \right\rfloor
\end{equation}
For example, if $f=\{4,2,0,2,0,0\}$, then 
$r(f) = \{5,0,2,0,0\}$. 

\begin{lem}\label{Reduction}
A configuration $f$ pebbles $T^h$ if and only if at least one pebble can 
be brought to the root of $T^h$ and $r(f)$ pebbles $T^{h-1}$.
\end{lem}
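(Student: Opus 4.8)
The plan is to prove both directions of the biconditional by exploiting the recursive structure of $T^h$, treating the root $\rho$ and its two isomorphic sub-trees $T^{h-1}$ separately, and tracking how pebbles flow between them.

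\textbf{The forward direction.} Suppose $f$ pebbles $T^h$. Taking $v=\rho$ shows immediately that at least one pebble can be brought to the root. For the second conclusion, I would argue that $r(f)$ pebbles every vertex of the left sub-tree $T^{h-1}$. Fix a target vertex $w$ in the left sub-tree. Since $f$ pebbles $T^h$, there is a sequence of pebbling moves bringing a pebble to $w$. I would show that the effect of this sequence on the left sub-tree can be simulated by $r(f)$: any pebbles that this sequence imports into the left sub-tree must cross the edge $\rho_L\rho$, and the best case is that they all arrive at $\rho_L$. The maximum number of pebbles deliverable to $\rho_L$ from outside the left sub-tree, starting from $f$, is obtained by amassing $c(S(f))/2$ pebbles at $\rho_R$ (the factor $1/2$ because $S(f)$ describes the configuration \emph{per vertex} and the right sub-tree can gather $c(S(f))$ pebbles total, but we have copies on both sides — I'd need to be careful here about whether $c(S(f))$ counts per-vertex or total), combining them with the $f_0$ pebbles already at $\rho$, pushing $\lfloor (f_0 + c(S(f))/2)/2\rfloor$ down to $\rho_L$, and adding the $f_1$ pebbles already at $\rho_L$. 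That is exactly the head $f_0'$ of $r(f)$ given by equation (\ref{Head}), while levels $2,\ldots,h$ of the left sub-tree retain $f_2,\ldots,f_h$, i.e.\ the left-shift $S(r(f))$ agrees with $S(f)$ restricted appropriately. Hence whatever $f$ could do to reach $w$, the reduced configuration $r(f)$ can do as well, so $r(f)$ pebbles $T^{h-1}$.

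\textbf{The reverse direction.} Suppose a pebble can be brought to the root of $T^h$ and $r(f)$ pebbles $T^{h-1}$. Given any target $w$: if $w=\rho$, we are done by hypothesis. If $w$ lies in the left sub-tree, then since $r(f)$ was constructed precisely by moving the maximal haul of pebbles onto $\rho_L$ and leaving the deeper levels untouched, every move sequence witnessing that $r(f)$ pebbles $w$ can be replayed on $T^h$: first execute the transport moves that realize the head $f_0'$ at $\rho_L$, then follow the $r(f)$-strategy. If $w$ lies in the right sub-tree, I would invoke symmetry — the configuration $f$ is symmetric, so the reduction onto the \emph{right} sub-tree yields a configuration isomorphic to $r(f)$, which pebbles $T^{h-1}$ by hypothesis, and the same replay argument applies.

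\textbf{Anticipated obstacle.} The main subtlety is the bookkeeping in the forward direction: a general pebbling sequence for $T^h$ need not respect the clean ``gather everything, then push down'' structure of $r(f)$ — it may interleave moves within the left sub-tree with moves that import pebbles, and it may even move pebbles \emph{out} of the left sub-tree. I would handle this with a standard exchange/rearrangement argument: moving a pebble out of the left sub-tree and later (possibly) back in is never beneficial, and any pebble entering the left sub-tree from outside can be assumed to enter as late as possible and to be routed straight to $\rho_L$; this reduces an arbitrary sequence to one of the canonical form, at the cost of only weakening the configuration to $r(f)$. Getting the floor function and the per-vertex versus total-count conventions exactly right (the $c(S(f))/2$ term) is where the argument must be stated with care, but it is essentially forced by the definition of $c(\cdot)$ and the fact that each internal edge halves the pebble count.
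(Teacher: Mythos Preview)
The paper states this lemma without proof, so there is no paper argument to compare against; it is evidently regarded as a routine observation once the reduction $r(f)$ has been defined for symmetric, even configurations. Your two-direction outline is exactly the right way to supply the missing details, and the exchange argument you sketch in the ``Anticipated obstacle'' paragraph is the standard way to handle the interleaving issue: on a tree, any sequence that sends pebbles across the edge $\rho\rho_L$ in both directions can be shortened without loss, so one may assume the traffic is one-way into the left sub-tree, and then a straightforward domination argument (if $g'\ge g$ pointwise then any pebbling sequence from $g$ is valid from $g'$) shows that $r(f)$ suffices.

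One small correction to your bookkeeping: the factor $1/2$ in $c(S(f))/2$ is \emph{not} about ``copies on both sides.'' By the paper's definition, $c(S(f))=f_1+\cdots+f_h$ is already the maximum number of pebbles the right sub-tree (by itself) can amass at $\rho_R$; the division by $2$ is simply the toll for crossing the single edge $\rho_R\rho$. Since $f$ is even, $c(S(f))$ is even and no floor is needed at that step. With that clarified, equation~(\ref{Head}) reads exactly as you want: gather $c(S(f))$ at $\rho_R$, push $c(S(f))/2$ to $\rho$, combine with $f_0$, push $\lfloor (f_0+c(S(f))/2)/2\rfloor$ down to $\rho_L$, and add the $f_1$ already there.
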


Let $h$ be a positive integer; we define a family
of pebbling configurations on the tree $T^h$. 
Given a list $L$ of length $\ell$, $\ell \le h$, 
let $\pad_h (L)$ be the list of length $h$ obtained by 
padding $L$ on the right by 0's.
Given a nonnegative integer $k$ such that $s_k \le 2^h$,
let
\[
f_{h,k} = \join \left[ \{ 2^h-s_k\}, 2 \pad_h(\Mer{k}) \right].
\]
A few remarks on this definition are in order.
Since $s_k \le 2^h$, the head of $f_{h,k}$ is nonnegative.
Furthermore, by Lemma \ref{UpperBound}, the length of $\Mer{k}$
is less than or equal to $h-2$. Thus  
$f_{h,k}$ is a well-defined pebbling configuration 
on $T^h$. For $h=5$ and $k=8$, we observe that 
$s_{8} = 28$ and $\Mer{8} = \{1,0,1\}.$
Therefore, the head of $f_{5,8}$ is $2^5-28=4$ and 
$2 \pad_5(\Mer{8}) = \{2,0,2,0,0\}$. Consequently,
$f_{5,8} 
= \{4,2,0,2,0,0\}.$ 

\begin{lem}\label{NumF}
Let $h$ be a positive integer and let $k$ be a nonnegative integer
satisfying $s_k \le 2^h$. Then $n(f_{h,k}) = 2^h - k$.
\end{lem}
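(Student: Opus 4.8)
The plan is to compute $n(f_{h,k}) = \sum_{i=0}^{h} 2^i (f_{h,k})_i$ directly from the definition of $f_{h,k}$ and to recognize the resulting sum as a telescoping/identity involving the partial sums $s_k$. Write $\Mer{k} = \{\e_1, \ldots, \e_\ell\}$, so that by definition $f_{h,k} = \{2^h - s_k, 2\e_1, 2\e_2, \ldots, 2\e_\ell, 0, \ldots, 0\}$, a list of length $h+1$ (recall $\ell \le h-2$ by Lemma~\ref{UpperBound}, so this is well-defined). Then
\[
n(f_{h,k}) = 2^0 (2^h - s_k) + \sum_{i=1}^{\ell} 2^i \cdot 2\e_i
= 2^h - s_k + \sum_{i=1}^{\ell} \e_i \, 2^{i+1}.
\]

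The key step is to rewrite $\sum_{i=1}^{\ell} \e_i 2^{i+1}$ in terms of $s_k$ and $k$. Since $M_i = 2^i - 1$, we have $2^{i+1} = 2 M_i + 2$, hence
\[
\sum_{i=1}^{\ell} \e_i 2^{i+1} = 2 \sum_{i=1}^{\ell} \e_i M_i + 2 \sum_{i=1}^{\ell} \e_i = 2k + 2\,\s(\Mer{k}),
\]
using the $M$-expansion $k = \e_1 M_1 + \cdots + \e_\ell M_\ell$ and the definition $\s(\Mer{k}) = \e_1 + \cdots + \e_\ell$. Substituting back,
\[
n(f_{h,k}) = 2^h - s_k + 2k + 2\,\s(\Mer{k}).
\]
Now I invoke Lemma~\ref{SFormula}, which gives $s_k = 3k + 2\,\s(\Mer{k})$, equivalently $2\,\s(\Mer{k}) = s_k - 3k$. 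Plugging this in,
\[
n(f_{h,k}) = 2^h - s_k + 2k + (s_k - 3k) = 2^h - k,
\]
as claimed.

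I should also handle the boundary case $k = 0$ separately, since $\Mer{0}$ and $s_0$ were defined (or are to be read) as the empty list and $0$ respectively: there $f_{h,0} = \{2^h, 0, \ldots, 0\}$, so $n(f_{h,0}) = 2^h = 2^h - 0$, consistent with the formula. The only mild subtlety — and the one thing worth stating carefully rather than the computation itself — is making sure the indexing lines up: the entry $(f_{h,k})_i$ for $1 \le i \le \ell$ equals $2\e_i = 2 \proj_i(\Mer{k})$, and the weight attached to level $i$ in $n(f)$ is $2^i$ (there are $2^i$ vertices at level $i$), so the product is $2^{i+1}\e_i$, matching the $M$-expansion weight $2^{i+1} = 2M_i + 2$ exactly. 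No genuine obstacle arises; this lemma is essentially a bookkeeping identity that repackages Lemma~\ref{SFormula}, and the proof is a two-line computation once the $M$-expansion is written out.
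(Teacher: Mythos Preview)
Your proof is correct and follows essentially the same route as the paper's: both compute $n(f_{h,k}) = (2^h - s_k) + \sum_i 2\e_i 2^i$, rewrite $2^{i+1}$ (equivalently $2 \cdot 2^i$) as $2M_i + 2$ to obtain $2k + 2\s(\Mer{k})$, and then invoke Lemma~\ref{SFormula} to cancel. The only cosmetic difference is that the paper writes $2^i = M_i + 1$ before multiplying by $2\e_i$, whereas you factor after; the computations are identical.
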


\begin{proof} The result is clear by inspection for $k=0$. 
For $k >0$, let $\Mer{k} = \{\e_1, \ldots, \e_\ell\}$. Then
$n(f_{h,k}) = (2^h-s_k) + 2\e_1 (2^1) + \cdots + 2\e_\ell (2^\ell)$.
For each $i \in [\ell]$, write $2^i = M_i+1$. Then
$n(f_{h,k}) = 2^h-s_k + 2k + 2 \s (\Mer{k})$.	
By Lemma \ref{SFormula}, $s_k = 3k + 2 \s(\Mer{k})$.
Inserting this into the equation above, we find 
$n(f_{h,k}) = 2^h - k$, as was to be shown.
\end{proof}

For each positive integer $h$, let 
$F_h = \{ f_{h,k} : s_k \le 2^h \}$.
The collections $F_4$ and $F_5$
are presented in Table \ref{PebConfigs}.

\begin{table}[htb]\caption{The collections $F_4$ and $F_5$}
\begin{tabular}[t]{clc} 	\toprule
$k$		&$f_{4,k}$				&$n(f_{4,k})$			\\ \hline
0		&$\{16,0,0,0,0\}$		&16					\\ 
1		&$\{11,2,0,0,0\}$		&15					\\ 
2		&$\{6,4,0,0,0\}$		&14					\\ 
3		&$\{5,0,2,0,0\}$		&13					\\ 
4		&$\{0,2,2,0,0\}$		&12					\\ 
\bottomrule
\end{tabular}
\quad\quad
\begin{tabular}[t]{clc} 	\toprule
$k$		&$f_{5,k}$				&$n(f_{5,k})$			\\ \hline
0		&$\{32,0,0,0,0,0\}$		&32					\\ 
1		&$\{27,2,0,0,0,0\}$		&31					\\ 
2		&$\{22,4,0,0,0,0\}$		&30					\\ 
3		&$\{21,0,2,0,0,0\}$		&29					\\ 
4		&$\{16,2,2,0,0,0\}$		&28					\\ 
5		&$\{11,4,2,0,0,0\}$		&27					\\ 
6		&$\{10,0,4,0,0,0\}$		&26					\\ 
7		&$\{9,0,0,2,0,0\}$		&25					\\ 
8		&$\{4,2,0,2,0,0\}$		&24					\\ 
\bottomrule
\end{tabular}
\label{PebConfigs}
\end{table}

\begin{lem}\label{BigReduce}
Let $h$ be an integer, $h \ge 2$, and let $f_{k,h} \in F_h$.
Then
\begin{enumerate}
\item\label{Valid} $f_{h-1, r(k)} \in F_{h-1}$, and 
\item\label{Reduce} $r(f_{h,k}) = f_{h-1, r(k)}$.
\end{enumerate}
\end{lem}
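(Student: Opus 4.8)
The plan is to dispatch part (\ref{Valid}) in one line from the earlier lemmas about $\{s_n\}$, and then to verify part (\ref{Reduce}) by directly computing both configurations and comparing them entry by entry.

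For part (\ref{Valid}), membership $f_{h-1,r(k)} \in F_{h-1}$ is exactly the inequality $s_{r(k)} \le 2^{h-1}$. Since $f_{h,k} \in F_h$ gives $s_k \le 2^h$, I would simply invoke Corollary \ref{ReduceUpperBound}, which says $s_k \ge 2 s_{r(k)}$, to get $2 s_{r(k)} \le s_k \le 2^h$ and hence $s_{r(k)} \le 2^{h-1}$. The case $k = 0$, where one reads $r(0) = 0$ and $s_0 = 0$, is immediate.

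For part (\ref{Reduce}), I would write $\Mer{k} = \{\e_1, \ldots, \e_\ell\}$, so that $f_{h,k}$ has head $2^h - s_k$ followed by $2\e_1, 2\e_2, \ldots, 2\e_\ell$ and then zeros. Then $S(f_{h,k})$ has entry-sum $c(S(f_{h,k})) = 2\s(\Mer{k})$, so formula (\ref{Head}) produces the head $f_0' = 2\e_1 + \lfloor (2^h - s_k + \s(\Mer{k}))/2 \rfloor$ of $r(f_{h,k})$. I would next observe, using Lemma \ref{SReductionFormula} (which exhibits $(s_k - \s(\Mer{k}))/2$ as an integer), that $s_k - \s(\Mer{k})$ is even, so the floor is exact and $f_0' = 2\e_1 + 2^{h-1} - (s_k - \s(\Mer{k}))/2$. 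A second application of Lemma \ref{SReductionFormula}, namely $(s_k - \s(\Mer{k}))/2 = s_{r(k)} + 2\proj_1(\Mer{k}) = s_{r(k)} + 2\e_1$, then cancels the two copies of $2\e_1$ and leaves $f_0' = 2^{h-1} - s_{r(k)}$. Finally, the non-head entries of $r(f_{h,k}) = \{f_0', f_2, f_3, \ldots, f_h\}$ are $2\e_2, \ldots, 2\e_\ell, 0, \ldots, 0$, and since $\Mer{r(k)} = S(\Mer{k}) = \{\e_2, \ldots, \e_\ell\}$ these coincide with the entries of $2\pad_{h-1}(\Mer{r(k)})$; comparing against $f_{h-1,r(k)} = \join[\{2^{h-1} - s_{r(k)}\},\, 2\pad_{h-1}(\Mer{r(k)})]$ closes the identity. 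I would finish by checking the degenerate cases $k \in \{0,1,2\}$, where $r(k) = 0$, $\ell \le 1$, and the tail $2\e_2, \ldots, 2\e_\ell$ is empty, directly against the head formula.

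This is bookkeeping rather than anything deep, so I do not expect a genuine obstacle. The one place that needs care is justifying that the floor in (\ref{Head}) disappears — i.e.\ that $2^h - s_k + \s(\Mer{k})$ is even — and keeping the small-index conventions $r(0) = r(1) = r(2) = 0$ consistent with the list operations $S$ and $\pad$ when $\Mer{k}$ has length at most one.
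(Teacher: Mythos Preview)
Your proposal is correct and follows essentially the same route as the paper: part (\ref{Valid}) via Corollary \ref{ReduceUpperBound}, and part (\ref{Reduce}) by computing the head of $r(f_{h,k})$ from (\ref{Head}) using $c(S(f_{h,k})) = 2\s(\Mer{k})$ and Lemma \ref{SReductionFormula}, then matching the tail via $S(\Mer{k}) = \Mer{r(k)}$. You are in fact slightly more careful than the paper in explicitly justifying that the floor in (\ref{Head}) is vacuous and in treating the small-$k$ edge cases.
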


\begin{proof} For reference,
\[
f_{h-1, r(k)} = \join [\{2^{h-1}-s_{r(k)}\}, 2 \pad_{h-1} (\Mer{r(k)})].
\]
By Corollary \ref{ReduceUpperBound}, $s_{r(k)} \le 2^{h-1}$,
which demonstrates that $f_{h-1, r(k)} \in F_{h-1}$,
proving part (\ref{Valid}).

To establish part (\ref{Reduce}), we begin 
with the calculation of the head of $r(f_{h,k})$; see equation (\ref{Head}).
The sum of the elements of $2 \pad_h(\Mer{k})$
is $2 \s(\Mer{k})$. Using Lemma \ref{SReductionFormula},
the head of $r(f_{h,k})$ is 
\begin{align*}
\frac{(2^h - s_k) + \s(\Mer{k})}{2} + 2 \proj_1(k) 
	&= 2^{h-1} - \left(\frac{s_k - \s(\Mer{k})}{2} - 2 \proj_1(k) \right)	\\
	&= 2^{h-1} - s_{r(k)},
\end{align*}
which shows that $r(f_{h,k})$ and $f_{h-1, r(k)}$ have
the same head. We are left to show that the remaining $h-1$ coordinates
of $r(f_{h,k})$ and $f_{h-1, r(k)}$ are equal, that is, we must show
\begin{equation}\label{Last}
S(2 \pad_h (\Mer{k})) = 2 \pad_{h-1} (\Mer{r(k)}).
\end{equation}
But $S(2 \pad_h (\Mer{k})) = 2 \pad_{h-1} (S(\Mer{k}))$ and, by
the definition of the reduction of an integer, 
$S(\Mer{k}) = \Mer{r(k)}$, which establishes equation (\ref{Last})
and draws our proof to a conclusion.
\end{proof}

We are now prepared to state and prove the main result of
this section.

\begin{thm}\label{MainUpperResult}
For each nonnegative integer $h$, 
$\pi^* (T^h) \le 2^h - k(h).$
\end{thm}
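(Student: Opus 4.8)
The plan is to exhibit, for each $h$, a symmetric pebbling configuration of $T^h$ with exactly $2^h - k(h)$ pebbles that pebbles $T^h$, and the obvious candidate is $f_{h,k(h)} \in F_h$. By Lemma~\ref{NumF}, $n(f_{h,k(h)}) = 2^h - k(h)$, so the content of the theorem reduces entirely to showing that $f_{h,k(h)}$ actually pebbles $T^h$. I would prove the slightly stronger and cleaner statement that \emph{every} configuration $f_{h,k} \in F_h$ pebbles $T^h$, and then specialize to $k = k(h)$; small or degenerate cases ($h = 0$, and $k = 0$ where all pebbles sit at the root) can be checked directly as a base.

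First I would set up an induction on $h$ using the recursive machinery already developed. By Lemma~\ref{Reduction}, a configuration $f$ pebbles $T^h$ if and only if (i) at least one pebble can be brought to the root $\rho$, and (ii) $r(f)$ pebbles $T^{h-1}$. For condition (ii): given $f_{h,k} \in F_h$ with $h \ge 2$, Lemma~\ref{BigReduce} tells us that $r(f_{h,k}) = f_{h-1,r(k)}$ and that $f_{h-1,r(k)} \in F_{h-1}$; hence the inductive hypothesis applied at height $h-1$ gives that $r(f_{h,k})$ pebbles $T^{h-1}$. So condition (ii) is handled for free by the lemmas in \S\ref{upper}. The base case $h = 1$ (and $h=0$) is a finite check over the few admissible $k$.

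The remaining work is condition (i): showing that $f_{h,k}$ can amass a pebble at the root $\rho$, i.e.\ that $c(f_{h,k}) \ge 1$. Writing $f_{h,k} = \{f_0, f_1, \ldots, f_h\}$ with $f_0 = 2^h - s_k$ and $f_i = 2\,\proj_i(\Mer{k})$ for $i \ge 1$ (zero beyond the length of $\Mer{k}$), we have $c(f_{h,k}) = (2^h - s_k) + 2\,\s(\Mer{k})$. If $f_0 \ge 1$ we are immediately done; the only worry is $f_0 = 0$, which by Lemma~\ref{SFormula} forces $s_k = 2^h$ and $2\s(\Mer{k}) = s_k - 3k = 2^h - 3k > 0$ provided $k < 2^h/3$, so a pebble sits somewhere on a non-root level and can be pushed up. One should double-check the edge case $k$ maximal so that $s_k = 2^h$ exactly; here $c(f_{h,k}) = 2\s(\Mer{k}) \ge 2$ as long as $\Mer{k}$ is nonempty, i.e.\ $k \ge 1$, which holds since $s_1 = 5 \le 2^h$ for $h \ge 3$. (For $h \le 2$ one simply inspects $F_h$ directly.)

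I expect the main obstacle to be bookkeeping rather than a genuine difficulty: one must confirm that the inductive hypothesis is being applied to a configuration that genuinely lies in $F_{h-1}$ (which is exactly Lemma~\ref{BigReduce}\eqref{Valid}) and that the reduction operation of Lemma~\ref{Reduction} matches the arithmetic reduction $r(k)$ (which is Lemma~\ref{BigReduce}\eqref{Reduce}), so that the two notions of ``reduction'' stay synchronized down the induction. The only subtle point is making sure the ``bring a pebble to the root'' condition never fails at any stage --- but since at each level of the induction the relevant configuration is some $f_{h',k'} \in F_{h'}$, the argument of the previous paragraph applies uniformly, and the whole proof collapses to: base case $+$ Lemma~\ref{Reduction} $+$ Lemma~\ref{BigReduce} $+$ the head-positivity observation. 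Finally, taking $k = k(h) = \max\{k : s_k \le 2^h\}$ gives $f_{h,k(h)} \in F_h$ pebbling $T^h$ with $2^h - k(h)$ pebbles, whence $\pi^*(T^h) \le 2^h - k(h)$.
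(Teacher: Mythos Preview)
Your proposal is correct and follows essentially the same approach as the paper: induction on $h$, using Lemma~\ref{Reduction} together with Lemma~\ref{BigReduce} to pass from $F_h$ to $F_{h-1}$, and then specializing to $k=k(h)$ via Lemma~\ref{NumF}. If anything you are more careful than the paper, which silently omits the verification of condition~(i) in Lemma~\ref{Reduction}; your observation that $c(f_{h,k}) = 2^h - s_k + 2\s(\Mer{k}) = 2^h - 3k \ge 2$ (for $k\ge 1$, via Lemma~\ref{SFormula}) fills that small gap.
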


\begin{proof}
We begin by showing that each pebbling configuration
in $F_h$ pebbles $T^h$. For $h=1$, there is only one pebbling configuration; namely, $f_{1,0} = \{2,0\}$, and it is easy to see that this configuration
pebbles $T^1$. Now let $h \ge 1$ and let us suppose that each pebbling configuration in $F_h$ pebbles $T^h$. Let $f_{h+1,k} \in F_{h+1}$.
By Lemma \ref{BigReduce}, $r(f_{h+1,k}) = f_{h,r(k)} \in F_h$. 
Since $f_{h,r(k)}$ pebbles $T^h$, $f_{h+1,k}$ pebbles $T^{h+1}$.

Let $h$ be a nonnegative integer. To finish our proof, 
observe that $s_{k(h)} \le 2^h$; thus, $f_{h,k(h)}$ pebbles $T^h$.
However, by Lemma \ref{NumF}, $n ( f_{h,k(h)} ) = 2^h - k(h)$, which shows
that $\pi^* (T^h) \le 2^h - k(h).$
\end{proof}

\section{The lower bound of $\pi^* (T^h)$}
\label{lower}

Here is the main result of this section.

\begin{thm}\label{MainLowerResult}
For each positive integer $h$, 
$\pi^* (T^h) \ge 2^h - k(h).$
\end{thm}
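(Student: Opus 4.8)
The plan is to establish a necessary condition that any pebbling configuration on $T^h$ must satisfy, phrased so that the configurations in $F_h$ are extremal for it, and then to show that this condition forces $f(T^h) \ge 2^h - k(h)$. First I would reduce to symmetric configurations: given any $f$ that pebbles $T^h$, the ``averaged'' configuration obtained by replacing $f(v)$ at each level-$i$ vertex by $\lfloor (\text{level-}i\text{ total})/2^i \rfloor$ should still pebble $T^h$ and use no more pebbles, so it suffices to bound symmetric configurations from below. (If symmetrization is not literally weight-nonincreasing, the cleaner route is to argue directly about the level sums $2^i f_i$.) Next I would exploit Lemma \ref{Reduction}: if a symmetric $f = \{f_0,\dots,f_h\}$ pebbles $T^h$, then $c(f) = \sum f_i \ge 1$ and $r(f)$ pebbles $T^{h-1}$. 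Iterating the reduction $h$ times peels the tree down to a single vertex, and at each stage we pick up a constraint from the requirement that a pebble reaches the current root.

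The core of the argument is to track how $n(f)$ decreases — or fails to decrease enough — under reduction. Writing $f^{(0)} = f$ and $f^{(j+1)} = r(f^{(j)})$, I would compute $n(f^{(j)}) - n(f^{(j+1)})$ in terms of the discarded level and the head adjustment given by equation (\ref{Head}), and show that the total budget spent, $\sum_j (n(f^{(j)}) - n(f^{(j+1)}))$, together with the terminal constraint, is at least $k(h)$ below $2^h$. The natural way to organize this is to define, for a configuration $g$ on $T^j$ that pebbles it, a quantity measuring ``how much better than $2^j$'' the count $n(g)$ is, and to show this quantity is at most $k(j)$ where $k$ is the same function from the statement; Corollary \ref{ReduceUpperBound} ($s_k \ge 2 s_{r(k)}$) is presumably the arithmetic engine that makes the induction close, since it mirrors the doubling $s_{M_{k+1}} = 2 s_{M_k} + 1$ built into the lists $A_k$. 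Concretely: if $f$ pebbles $T^h$ with $n(f) = 2^h - m$ for some $m$, I want to conclude $s_m \le 2^h$, hence $m \le k(h)$ by definition of $k(h)$.

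The induction would run roughly as follows. Base case $h = 1$: any configuration pebbling $T^1$ has at least $2 = 2^1 - k(1)$ pebbles since $k(1) = 0$ (as $s_1 = 5 > 2$). Inductive step: given $f$ pebbling $T^h$ with $n(f) = 2^h - m$, Lemma \ref{Reduction} gives that $r(f)$ pebbles $T^{h-1}$, so by the inductive hypothesis $n(r(f)) = 2^{h-1} - m'$ with $s_{m'} \le 2^{h-1}$, i.e. $m' \le k(h-1)$. Then I relate $m$ to $m'$ through the head formula and the pebble-to-the-root condition; the condition ``at least one pebble reaches $\rho$'' should translate into a lower bound on $c(f)$ that, combined with the reduction bookkeeping, yields $m \le r^{-1}$-type growth, precisely $s_m \le 2 s_{m'} \le 2 \cdot 2^{h-1} = 2^h$ via Corollary \ref{ReduceUpperBound}. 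That last chain is the crux.

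\textbf{Main obstacle.} The hard part will be the reduction-to-symmetric step and, more seriously, pinning down the exact necessary condition so that equality is achieved by $f_{h,k(h)}$ — i.e. identifying the right invariant $m \mapsto s_m$ and proving the floor in equation (\ref{Head}) never costs us more than the bookkeeping in Lemma \ref{SReductionFormula} allows. The floors are delicate: a naive accounting would lose a constant per level and destroy the bound, so the argument must show that whenever the floor in the head formula truncates, the configuration was ``wasting'' pebbles in a way already counted. I expect the paper to finesse this by proving a sharper statement than $n(f) \ge 2^h - k(h)$ directly — perhaps that $2^h - n(f)$ can be written in $M$-expansion-compatible form — and then invoking Lemmas \ref{ExpandS}--\ref{SReductionFormula} to convert that into the clean inequality $s_{2^h - n(f)} \le 2^h$.
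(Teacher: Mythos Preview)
Your approach diverges substantially from the paper's, and the inductive step has a genuine gap. The paper does not iterate Lemma~\ref{Reduction} at all for the lower bound. Instead, after citing Fu and Shiue to reduce to symmetric \emph{and even} configurations, it proves a single global necessary condition (Lemma~\ref{lem:necessary}): if $f$ pebbles $T^h$ then $3n(f) - c(f) \ge 2^{h+1}$, obtained by treating pebbles as infinitely divisible ``liquid'' and computing the maximum fraction deliverable to a fixed leaf. It then identifies the configuration $\psi_{h,m}$ that maximizes $3n(f)-c(f)$ over even symmetric $f$ with $m$ pebbles, sets $t_m = 3n(\psi_{h,m}) - c(\psi_{h,m})$, and finishes with a case analysis on the $\mu$-expansion of $2^h$ showing $t_{2^h-k(h)-1} < 2^{h+1}$.

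The gap in your plan is the sentence ``precisely $s_m \le 2s_{m'} \le 2\cdot 2^{h-1} = 2^h$ via Corollary~\ref{ReduceUpperBound}.'' That corollary says $s_k \ge 2 s_{r(k)}$, which gives what you want only if you first prove $m' \ge r(m)$, i.e., that the savings $2^{h-1}-n(r(f))$ for an \emph{arbitrary} configuration are at least as large as for the extremal $f_{h,m}$. Nothing in your outline establishes this; Lemma~\ref{BigReduce} shows $r(f_{h,k}) = f_{h-1,r(k)}$ only for the specific family $F_h$, and for a general even symmetric $f$ with $n(f)=2^h-m$ there is no a~priori link between $2^{h-1}-n(r(f))$ and the $M$-expansion of $m$. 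The machinery of \S\ref{lemmas}--\S\ref{upper} is tailored to certify that the $f_{h,k}$ \emph{work}, not to bound arbitrary configurations; the floor issue you flag is a symptom of exactly this, since for non-extremal $f$ the head formula~(\ref{Head}) loses pebbles in ways unrelated to $\Mer{m}$. Closing the induction would require an extremality statement of strength comparable to the paper's liquid bound, at which point the iteration is no longer doing the work.
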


The proof of this theorem relies on two noteworthy results of Fu and Shiue. 
According to Lemma 3.3 and Theorem 3.4
of their paper, an optimal \emph{symmetric} and \emph{even} 
pebbling configuration on $T^h$ is an optimal pebbling configuration on $T^h$.
In effect, our proof of Theorem \ref{MainLowerResult}
shows that a symmetric and even pebbling configuration 
on $T^{h-1}$ that contains $2^h - k(h) - 1$ pebbles cannot pebble $T^h$.
Hereafter, we consider only symmetric and even pebbling
configurations on $T^h$.

Our next lemma is a simple necessary condition for pebbling.

\begin{lem}\label{lem:necessary}
If $f = \{f_0, \ldots, f_h\}$ pebbles $T^h$, then 
\begin{equation}\label{eq:liquid}
3 n(f) - c(f) \ge 2^{h+1}.
\end{equation}
\end{lem}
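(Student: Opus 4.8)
The plan is to track, for a fixed target leaf $v$, how many pebbles a configuration $f$ can deliver to $v$, and to convert the statement ``$f$ pebbles $T^h$'' into a linear inequality by summing a weighted potential over the tree. The natural weight to attach to a vertex at level $i$ is $2^{-i}$, since a pebbling move from level $i$ to level $i-1$ destroys one pebble but preserves this weighted count, while a move from level $i-1$ to level $i$ (away from the root) is strictly wasteful. Thus for any vertex $w$, the quantity $\sum_{u} f(u) 2^{-d(w,u)}$ is an upper bound for what can be amassed at $w$; but this is too coarse to give the claimed bound directly, because the worst case is the root (where it gives $c(f)$ after clearing denominators) versus a leaf (where it is much smaller). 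The key is to combine the leaf constraint with the global pebble count $n(f)$ in the right proportions.

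Here is the structure I would follow. First I would fix a deepest leaf $v$ and split $T^h$ into the path $\rho = w_0, w_1, \ldots, w_h = v$ from root to $v$ together with the subtrees hanging off this path. Since $f$ is symmetric, a pebble at level $i$ that is not on the path lies in a subtree whose root is some $w_j$ with $j < i$; the cheapest route for such a pebble to reach $v$ is: up to $w_j$, then down the path to $v$. Second, I would compute the maximum number of pebbles deliverable to $v$ as a function of the $f_i$'s: summing the contributions with the appropriate powers of $2$ in the denominators. The pebbles at level $0$ (the head) contribute $f_0/2^h$; the pebbles at level $i \ge 1$ split into those on the path (contributing $f_i/2^{h-i}$ each, but there is only one such vertex per level) and those off the path. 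Third — and this is where the constant $3$ and the exponent $h+1$ should emerge — I would add to the leaf inequality a suitable multiple of the trivial identity $n(f) = \sum_{i=0}^h 2^i f_i$ so that the resulting coefficient of each $f_i$ is nonnegative, and the resulting coefficient of $c(f) = \sum_i f_i$ works out so that $3n(f) - c(f)$ dominates the weighted sum that must be at least $1$ to pebble $v$. Clearing the $2^{-h}$ denominator then produces $2^{h+1}$ on the right-hand side.

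I expect the main obstacle to be bookkeeping the ``off-path'' pebbles correctly: a pebble at level $i$ in a subtree rooted at $w_j$ travels distance $(i-j) + (h-j)$ to reach $v$, so its delivery weight is $2^{-(i+h-2j)}$, and one must sum over all such subtree positions and verify that, after symmetrization, the total is bounded by a clean expression in the $f_i$'s — this is the step where an over-generous bound would lose the factor and give a weaker inequality than $3n(f) - c(f) \ge 2^{h+1}$. The clean way to handle it is to observe that bringing the maximum number of pebbles to $v$ is equivalent to first bringing the maximum number of pebbles to $w_h$'s parent's... more cleanly, to run the reduction operator of Lemma \ref{Reduction} down the path $v$ sits on, so that at each step the content of the far subtree is folded in at half rate. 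This recursive viewpoint replaces the explicit double sum with an easy induction on $h$: if $g$ denotes the configuration induced on the length-$h$ path toward $v$ after optimally folding in everything, then the amount reaching $v$ is $\sum_{i} g_i 2^{-(h-i)}$ with $g_h = f_h$ and $g_i = f_i + (\text{folded contributions})$, and one checks $\sum_i 2^i g_i \le n(f)$ while $\sum_i g_i \le c(f)$; requiring the delivered amount to be $\ge 1$ and combining these two bounds with coefficients $3$ and $-1$ yields \eqref{eq:liquid}. I would write the induction explicitly rather than the double sum, since it is shorter and the algebra is forced.
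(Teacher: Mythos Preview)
Your overall setup --- treat pebbles as infinitely divisible liquid, fix a leaf $v$, decompose $T^h$ into the spine $\rho=w_0,\ldots,w_h=v$ and the subtrees hanging off it, and compute the maximal liquid deliverable to $v$ --- is exactly the paper's approach. The paper simply carries out the direct sum you sketch in your second paragraph, and it is cleaner than you fear: the remainder contribution from level $d$ works out to $f_d M_d/2^{h+1}$, summing over $d$ gives $(n(f)-c(f))/2^{h+1}$, the spine contributes $2n(f)/2^{h+1}$, and the total $(3n(f)-c(f))/2^{h+1}$ must be at least $1$. No linear combination trick is needed; the $3$ and the $2^{h+1}$ fall straight out of the arithmetic.

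Your preferred inductive route in the final paragraph, however, does not work as written. Both inequalities you plan to ``check'' are false. Take $h=1$ and $f=\{f_0,f_1\}$: folding the off-spine leaf onto the root gives $g_0=f_0+f_1/2$, $g_1=f_1$, so $\sum_i 2^i g_i = f_0+\tfrac{5}{2}f_1 > n(f)=f_0+2f_1$ and $\sum_i g_i = f_0+\tfrac{3}{2}f_1 > c(f)=f_0+f_1$ whenever $f_1>0$. More structurally, the plan is pointed the wrong way: you want the \emph{lower} bound $\sum_i 2^i g_i \ge 2^h$ (delivery $\ge 1$), so pairing it with \emph{upper} bounds on $\sum_i 2^i g_i$ and $\sum_i g_i$ cannot produce a lower bound on $3n(f)-c(f)$. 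What you actually need is an \emph{equality} expressing the delivered liquid directly in terms of $n(f)$ and $c(f)$, and that is precisely what the direct double-sum computation provides. Abandon the induction and do the sum.
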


\begin{proof} The key idea is to treat the pebbles as units of liquid, 
that is, as infinitely divisible units. In the spirit of pebbling,
if a unit of liquid (a pebble) is distance $d$ from a 
specified leaf, then it can deliver $1/2^d$ units of liquid to that leaf. 
We show that when the inequality (\ref{eq:liquid}) 
is satisfied, then $f$ can deliver a unit of liquid to a specified leaf, 
which is a necessary condition for delivering a pebble to that leaf.

Let a leaf be specified and consider the path from the root to this leaf.
We call this path the \emph{spine}. Let the rest of the tree be
called the \emph{remainder}. 
For example, the tree $T^4$, separated into a spine and remainder,
is pictured in Figure \ref{fig:SpineRemainder}.

\begin{figure}[htb]
\includegraphics[width=.7\textwidth]{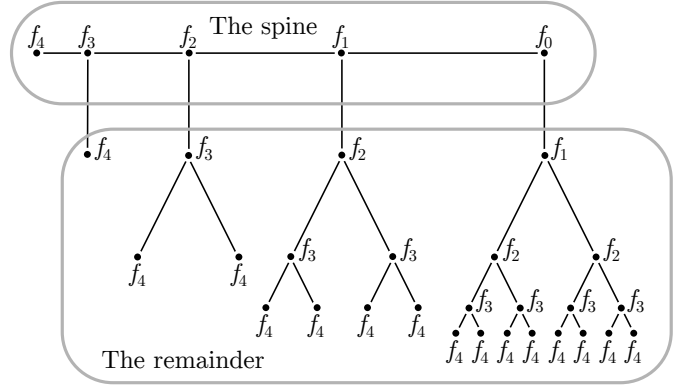}
\caption{The tree $T^4$ is separated into its spine and remainder}
\label{fig:SpineRemainder}
\end{figure}

The contribution to the leaf from the pebbles in the remainder at distance $h$ 
from the root is
\[
\frac{2^{h-1} f_h}{2^{2h}} + \frac{2^{h-2} f_h}{2^{2h-2}} + \cdots
+ \frac{2^1 f_h}{2^4} + \frac{f_h}{2^2} 
= \frac{f_h M_h}{2^{h+1}}.
\]
In general, for $d \in \{1, \ldots, h\}$,
the contribution to the leaf from the remainder from the 
pebbles at distance $d$ from the root is $f_d M_d/2^{h+1}$. 
In total, the contribution to the leaf from the remainder is
\[
\frac{1}{2^{h+1}} \left( f_1 M_1 + \ldots + f_h M_h \right)
= \frac{1}{2^{h+1}} \left( n(f) - c (f) \right).
\]
The contribution to the leaf from the spine is
$2 n(f)/2^{h+1}.$
Consequently, the total contribution to the leaf from the (liquid) pebbles
in the configuration $f$ is $(3 n (f)  - c (f))/2^{h+1}.$
If $f$ can deliver at least one pebble to the leaf, 
then $f$ can deliver at least one unit of liquid to the leaf, 
which completes our proof.
\end{proof}

The following sequence plays an important role in the proof
of Theorem \ref{MainLowerResult}. Let $c_1 = 2$ and, for $k \ge 2$,
let $c_k = 3(2^k)-2$. An important feature of this 
sequence is contained in the next lemma, which we state without 
proof.

\begin{lem}\label{subadditive} For $i \ge 1$,
$3c_1 + c_2 + c_3 + \cdots + c_i = c_{i+1} - (i+1)2.$
\end{lem}

Let $h$ and $m$ be positive integers with $m < 2^{h+2}$. 
The binary expansion of $m$ is 
$
m = \d_0 + 2^1 \d_1 + 2^2 \d_2 + \cdots + \d_{h+1} 2^{h+1}, 
$
where $\d_i \in \{0,1\}$ for each integer $i$, $0 \le i \le h+1$.
The configuration
\[
\psi_{h,m} = \{ \d_0 + 2 \d_1, 2 \d_2, \ldots, 2\d_{h+1}\}
\]
is an even, symmetric pebbling configuration on $T^h$ containing $m$
pebbles. It is easy to show that $\psi_{h,m}$ maximizes
$3 n(f) - c(f)$ among the set of even, symmetric pebbling configurations 
$f$ on $T^h$ containing $m$ pebbles. 
Let $t_m = 3 n(\psi_{h,m} ) - c ( \psi_{h,m} )$; it can be shown that
\begin{equation}\label{Tsequence}
t_m = (\d_0 + \d_1 2^1)c_1  + \d_2 c_2 + \cdots + \d_{h+1} c_{h+1}.
\end{equation}

Finally, here is the proof of Theorem \ref{MainLowerResult},
the main result of this section.

\begin{proof}[Proof of Theorem \ref{MainLowerResult}]
The result is true for $h=1$ by inspection.
Let $h \ge 2$ be given. 
We show that $t_{2^h-k(h)-1} < 2^{h+1}$, which, 
according to Lemma \ref{lem:necessary}, proves the theorem.

Let $2^h = r + \e_1 \mu_1 + \cdots + \e_\ell \mu_\ell$ be
the $\mu$-expansion of $2^h$. Then $k(h) = \e_1 M_1 + \cdots + \e_\ell M_\ell$
and $s_{k(h)} = \e_1 \mu_1 + \cdots + \e_\ell \mu_\ell$. Thus
\begin{align*}
2^h - k(h) 
	&= r + \e_1 (\mu_1 - M_1) + \cdots + \e_\ell (\mu_\ell - M_\ell)	\\
	&= r + \e_1 2^2 + \cdots + \e_\ell 2^{\ell+1}
\end{align*}
Our proof is divided into cases, depending on whether or not
$r=0$.

First, let us suppose that $r=0$ and that $\e_i \ne 2$
for each $i \in [\ell]$ in the $\mu$-expansion of $2^h$. 
Then the binary expansion of $2^h - k(h)$ is 
$\e_1 2^2 + \cdots + \e_\ell 2^{\ell+1}$ and therefore
\begin{align*}
t_{2^h-{k(h)}} 
	&= \e_1 c_2 + \e_2 c_3 + \cdots + \e_{\ell} c_{\ell+1}	\\
	&= 2(\e_1 \mu_1 + \e_2 \mu_2 + \cdots + \e_{\ell} \mu_{\ell})	\\
	&= 2 s_{k(h)} 	\\
	&= 2^{h+1}.
\end{align*}
Since the sequence $\{t_k\}$ is strictly increasing, $t_{2^h-{k(h)}-1} < 2^{h+1}$.

Next, let us suppose that $r=0$ but that $\e_j = 2$ for some 
$j \in [\ell]$ in the $\mu$-expansion of $2^h$. In particular,
this implies that $\e_i \in \{0,1\}$ for each integer 
$i \in \{j+1, \ldots, \ell\}$. Then 
\[
2^h - {k(h)} = 2(2^{j+1}) + \e_{j+1} 2^{j+2} + \cdots \e_{\ell} 2^{\ell+1}.
\]
Therefore, the binary expansion of $2^h - k(h)-1$ is
\[
1 + 2 + 2^2 +  \cdots + 2^j + 2^{j+1} + \e_{j+1} 2^{j+2} + \cdots + \e_{\ell} 2^{\ell+1}
\]
hence
\[
t_{2^h-{k(h)}-1} 
	= 3 c_1 + c_2 + \cdots + c_j +c_{j+1} + 
		\e_{j+1} c_{j+2} + \cdots + \e_{\ell} c_{\ell+1}.
\]
By Lemma \ref{subadditive}, we may conclude 
\begin{align*}
t_{2^h-{k(h)}-1} 
	&= 2 c_{j+1} + \e_{j+1} c_{j+2} + \cdots \e_{\ell} c_{\ell+1} -(j+1)2	\\
	&= 2 (2 \mu_{j} + \e_{j+1} \mu_{j+1} + \cdots \e_{\ell} \mu_{\ell}) -(j+1)2	\\
	&= 2 s_{k(h)} -(j+1)2 \\
	&= 2^{h+1} - (j+1)2.
\end{align*}
In particular, this shows that $t_{2^h-{k(h)}-1} < 2^{h+1}$,
as was to be shown.

Lastly, let us assume that $r \in \{1,2,3,4\}$ in the $\mu$-expansion
of $2^h$. This implies that $\e_i \in \{0,1\}$ for each $i \in [\ell]$.
Then $r-1 \in \{0,1,2,3\}$ and the binary expansion of $2^h -{k(h)} - 1$ 
is 
\[
(r-1) + \e_1 2^2 + \e_2 2^3 + \cdots + \e_\ell 2^{\ell+1}.
\]
Thus,
\begin{align*}
t_{2^h -{k(h)} - 1} 
	&= 2(r-1) + \e_1 c_2 + \e_2 c_3 + \cdots + \e_\ell c_{\ell+1}		\\
	&= 2(r-1) + \e_1 2\mu_1 + \e_2 2\mu_3 + \cdots + \e_\ell 2\mu_{\ell}		\\
	&= 2 \left( (r-1) + \e_1 \mu_1 + \e_2 \mu_3 + \cdots + \e_\ell \mu_{\ell} \right)	\\
	&= 2^{h+1} - 2,
\end{align*}
as was to be shown.
\end{proof}

\section{Asymptotic analysis of $k(h)$} 
\label{asymptotics}

In this section we study the asymptotic behavior of $k(h)$
as $h \to \infty$.
Given a positive integer $m$, let $s^{-1} (m) = \max \{n : s_n \le m \}.$
In this notation, $k(h) = s^{-1} (2^h).$ Our first step is to
develop a formula for the inverse of $s$.

Let $m$ be a positive integer with $\mu$-expansion
$m = r + \e_1 \mu_1 + \cdots + \e_\ell \mu_\ell.$
Let $\phi(m) = \e_1 + \cdots + \e_\ell$.
For example, the $\mu$-expansions of 236 and 253 are $2\mu_2 + \mu_3 + \mu_6$ and 
$4 + \mu_2 + \mu_4 + \mu_6$; thus $\phi(236) = 4$ and $\phi(253) = 3$.

\begin{thm}\label{sInverse} Given a positive integer $m$,
there exists an integer $r \in \{0,1,2,3,4\}$ such that
$m - r = 3s^{-1} (m) + 2 \phi (m)$.
\end{thm}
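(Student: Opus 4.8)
The plan is to invert the identity from Lemma~\ref{SFormula}, namely $s_n = 3n + 2\s(\Mer{n})$, by matching it against the $\mu$-expansion machinery already in place. Write $n = s^{-1}(m)$, so $n$ is the largest index with $s_n \le m$. The idea is that $s_n$ should be the ``truncation'' of $m$ to its $\mu$-digits: if $m = r + \e_1\mu_1 + \cdots + \e_\ell\mu_\ell$ is the $\mu$-expansion, then $s_n = \e_1\mu_1 + \cdots + \e_\ell\mu_\ell$, i.e.\ $m - r = s_n$, and by Lemma~\ref{ExpandS} the integer $n$ is the one with $\Mer{n} = \{\e_1,\ldots,\e_\ell\}$. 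Then $\phi(m) = \e_1 + \cdots + \e_\ell = \s(\Mer{n})$, and Lemma~\ref{SFormula} gives $s_n = 3n + 2\s(\Mer{n}) = 3\,s^{-1}(m) + 2\phi(m)$, whence $m - r = 3\,s^{-1}(m) + 2\phi(m)$, which is exactly the claim.

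So the work reduces to two things. First I would check that the number $n$ defined by $\Mer{n} = \{\e_1,\ldots,\e_\ell\}$ (the $M$-expansion with the given digit list) is indeed a legitimate $M$-expansion, i.e.\ that the digit list coming from a $\mu$-expansion satisfies the same structural constraints as an $M$-expansion — each $\e_i \in \{0,1,2\}$ and, if some $\e_j = 2$, then $\e_i = 0$ for all $i < j$. This is immediate from the definition of the $\mu$-expansion in equation~(\ref{Mu-expansion}), which imposes precisely $\e_i \in \{0,1,2\}$ and the same leading-zeros condition when a digit is $2$; moreover when some $\e_j = 2$ one has $r = 0$. Second, and this is the crux, I would verify that $s_n \le m < s_{n+1}$ for this $n$, so that $n$ really is $s^{-1}(m)$. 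The lower bound is clear since $s_n = m - r \le m$. For the strict upper bound, note $s_{n+1} - s_n = a_{n+1} \ge 1$; one must show $r < a_{n+1}$, equivalently that adding one more term of the list $A$ overshoots $m$. Since $r \le 4$ and the entries of $A$ are only $1$ or $5$, the only danger is when $a_{n+1} = 1$ and $r \ge 1$. Here I would argue via the recursive structure of the lists $A_k$: a run of $1$'s in $A$ appears only at the tail of some $A_k = \join[A_{k-1},A_{k-1},\{1\}]$, and crossing such a run corresponds exactly to the ``$r = M_\ell$'' stopping case (the $\e_j = 2$ case) in the $\mu$-expansion, which forces $r = 0$. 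So whenever $r \in \{1,2,3,4\}$ the next list entry $a_{n+1}$ must be $5 > r$, and the inequality $r < a_{n+1}$ holds in every case.

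The main obstacle I anticipate is precisely this last bookkeeping step: carefully pinning down, in terms of the $\mu$-expansion digits of $m$, when $a_{n+1} = 1$ versus $a_{n+1} = 5$, and confirming that the former never coincides with a positive remainder $r$. One clean way to organize it is to reuse the proof of Lemma~\ref{ExpandS}: that argument already tracks how $s_{M_\ell + r'} = s_{M_\ell} + s_{r'}$ peels off one $\mu$-digit at a time, and the stopping conditions there ($r' = 0$ giving $n = M_\ell$, $r' = M_\ell$ giving $n = 2M_\ell$) are exactly the two cases that produce a trailing $\e$-digit of $1$ or $2$. Threading $s^{-1}$ back through that recursion, rather than reasoning directly about runs of $1$'s in $A$, will likely give the shortest rigorous argument. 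Once $s_n \le m < s_{n+1}$ is established, the rest is the one-line substitution into Lemma~\ref{SFormula} described above.
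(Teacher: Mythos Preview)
Your proposal is correct and follows essentially the same route as the paper: take the $\mu$-expansion $m = r + \sum \e_i \mu_i$, use $\mu_i = 3M_i + 2$ (equivalently Lemma~\ref{SFormula}), and read off $m - r = 3n + 2\phi(m)$ for $n = \sum \e_i M_i$. The paper's proof is in fact terser than yours---it simply asserts that $\sum \e_i M_i = s^{-1}(m)$ without checking $s_n \le m < s_{n+1}$---so the ``crux'' you worry about (ruling out $r \ge 1$ with $a_{n+1} = 1$) is a detail the paper leaves implicit, and your argument that $r>0$ forces all $\mu$-digits (hence all $M$-digits of $n$) to lie in $\{0,1\}$, whence $a_{n+1}=5$, is exactly the right way to fill it in.
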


\begin{proof}
Let $m = r + \e_1 \mu_1 + \cdots + \e_\ell \mu_\ell$ be the
$\mu$-expansion of $m$. Since $\mu_i = 3 M_i + 2$ for each 
positive integer $i$, 
\[
m -  r = 3 ( \e_1 M_1 + \cdots + \e_\ell M_\ell) + 2 (\e_1 + \ldots + \e_\ell).
\]
The right-hand side is $3 s^{-1} (m) + 2 \phi (m)$,
as was to be shown.
\end{proof}

Let $h$ be a positive integer. Since $k(h) = s^{-1} (h)$,
Theorem \ref{sInverse} asserts that 
there exists an integer $r \in \{0,1,2,3,4\}$ such that
\begin{equation}\label{Asymptotics}
k(h) = 2^h/3 - 2\phi(2^h)/3 -r/3.
\end{equation}
Therefore, in order to understand the asymptotic behavior of $k(h)$, 
we need to investigate the asymptotic behavior of $\phi(2^h)$. To this end, let
\begin{equation}\label{AlphaSequence}
\a (h) = \frac{2 \phi(2^h) - (h+2)}{\log_2 (h+2)}.
\end{equation}
We prove the following theorem.

\begin{thm} For $h \ge 2$, $-1 \le \a (h) \le 1.$ 
In addition, $\liminf_{h \to \infty} \alpha (h) = -1$ and 
$\limsup_{h \to \infty} \alpha (h) = 1.$
\end{thm}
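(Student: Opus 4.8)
The plan is to reduce everything to understanding the quantity $\phi(2^h)$, the sum of the $\mu$-digits of $2^h$, since equations~(\ref{Asymptotics}) and~(\ref{AlphaSequence}) show that the behavior of $\a(h)$ is entirely governed by how $2\phi(2^h)$ compares to $h+2$. By Lemma~\ref{ExpandS}, $\Mu{s_n} = \Mer{n}$, and more to the point the $\mu$-expansion of $2^h$ is intimately tied to Mersenne numbers: since $\mu_i = 2^{i+1} + 2^i - 1$, writing $2^h$ greedily in terms of the $\mu_i$ amounts to a kind of ``base-$3/2$'' or redundant binary representation. So the first step is to pin down a clean recursive description of the $\mu$-expansion of $2^h$ as $h$ increases, ideally a recurrence of the form ``$\phi(2^{h+1})$ is obtained from $\phi(2^h)$ by a controlled operation (carrying).'' Concretely I would track the $\mu$-digit list of $2^h$ and show how multiplication by $2$ acts on it, since $\mu_i \approx \tfrac32 \cdot 2^i$ the map is roughly a shift with carries, and the carry structure is what produces the digit $2$ and the remainder $r$.

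Second, having that recursion, I would establish the two-sided bound $-1 \le \a(h) \le 1$, i.e. $|2\phi(2^h) - (h+2)| \le \log_2(h+2)$. The intuition is that a ``typical'' representation of $2^h$ in this redundant system has roughly $h$ terms each contributing about $\tfrac32 2^i$, so $\sum \e_i \approx \tfrac{2}{3}\cdot\frac{2^h \cdot (\text{something})}{\cdots}$; but really the right heuristic is that each $\mu_i$ used ``costs'' about $\log_2$ of itself in exponent, so to build up $2^h$ you need the exponents to sum appropriately and $\phi$ counts the number of ``blocks,'' which can be as few as $O(1)$ (when $2^h$ is close to a single $\mu_\ell$ or $2\mu_\ell$) or as many as $\Theta(\log h)$ (when the greedy expansion cascades into many levels). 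I would make this precise by bounding, at each reduction step $m \mapsto m - \mu_\ell$ (or $m \mapsto r$), how much the exponent $\ell$ drops: it drops by at least $1$ each time, giving at most $h$ digits trivially, but a sharper accounting of the drops — each nonzero digit beyond the leading one forces the remaining target to be less than $\mu_\ell$, hence of exponent $< \ell$, and in fact the ratio $\mu_{\ell}/\mu_{\ell-1} \to 2$ forces the exponents to decrease geometrically in number of ``large gaps'' — yields the $\log_2(h+2)$ bound. This is the step I expect to be the main obstacle: getting the constant exactly $1$ (not just $O(\log h)$) will require a careful, possibly case-based, analysis of the carry dynamics, and matching the $+2$ shift inside $\log_2(h+2)$ rather than $\log_2 h$ is a genuine fine-tuning that the authors presumably arranged deliberately.

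Third, for the $\limsup$ and $\liminf$ claims I would exhibit two explicit families of heights. For $\limsup \a(h) = 1$: choose $h$ so that $2^h$ has the maximally cascading $\mu$-expansion — heuristically, $h$ of the form where $2^h$ is just below a power-of-two-like threshold in the redundant system, forcing a long chain of digit-$1$'s all the way down, so that $\phi(2^h)$ is as large as possible, of size $\sim \tfrac12(h+2) + \tfrac12\log_2(h+2)$, making $\a(h) \to 1$. One natural candidate family: $h = $ values where the greedy algorithm produces $\e_1 = \e_2 = \cdots = \e_t = 1$ with $t$ maximal, which should occur for $h$ near $2^j$-type values; I would verify the asymptotics of $\phi$ along this family directly from the recursion in step one. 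For $\liminf \a(h) = -1$: choose $h$ so that $2^h$ lies very close to $\mu_\ell$ or $2\mu_\ell$ for a single large $\ell$, so that $\phi(2^h)$ is $O(1)$ (or even $1$ or $2$), giving $2\phi(2^h) - (h+2) \approx -(h+2)$ and hence $\a(h) \to -1$. Again the work is to show such $h$ occur infinitely often and to compute $\phi$ along them, which the recursion from step one should make routine. The only delicate point is confirming that along these extremal families the error terms are genuinely $o(\log_2(h+2))$ so that the limits are exactly $\pm 1$ and not something strictly inside; this ties back to the same carry-analysis obstacle flagged above.
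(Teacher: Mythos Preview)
Your proposal has a genuine error in the $\liminf$ argument that reveals a misunderstanding of the scale of $\phi(2^h)$. You suggest choosing $h$ so that $2^h$ is close to a single $\mu_\ell$ or $2\mu_\ell$, making $\phi(2^h) = O(1)$. But if $\phi(2^h)$ were bounded, then $2\phi(2^h)-(h+2) \sim -(h+2)$ and $\a(h) \to -\infty$, contradicting the very bound $\a(h)\ge -1$ you are trying to prove. In fact $\phi(2^h)$ is \emph{always} of order $h/2$: since $\mu_{h-2}\le 2^h < \mu_{h-1}$ and $2^h-\mu_{h-2}=2^{h-2}+1$, the greedy algorithm peels off $\mu_{h-2},\mu_{h-4},\ldots$ one at a time, reducing the exponent by exactly $2$ at each step, so roughly $h/2$ terms are forced before anything interesting happens. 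The entire content of the theorem lives in the $O(\log h)$ fluctuation around $h/2$, not in $\phi$ itself being small or large.

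The paper's proof exploits this directly. One checks the exact identity
\[
2^h-\sum_{i=1}^{j}\mu_{h-2i}=2^{h-2j}+j,
\]
valid as long as the right side is at least $\mu_{h-2j-2}$; this fails precisely when $j\ge 2^{h-2j-1}-1$, i.e.\ at $j^*=\lceil x^*\rceil$ where $x^*$ solves $h-2x-1-\log_2(x+1)=0$. At that point the next subtraction is $\mu_{h-2j^*-1}$ and one is left with a remainder $A\le j^*$, whence $\phi(2^h)=j^*+1+\phi(A)$ with $0\le\phi(A)\le h-2j^*-1\sim\log_2(h+2)$. The two-sided bound on $\a(h)$ then follows from the elementary estimate $j^*+1\ge \tfrac12(h+2)-\tfrac12\log_2(h+2)$ coming from the defining equation for $x^*$. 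For the extremal families the paper takes $h_k=2^{k+1}+k-1$ (where $A=0$, so $\phi(2^h)=j^*+1=2^k$ exactly, giving $\a\to -1$) and $h_k=2^{k+2}-k$ (where $A$ has the maximal cascading expansion $\mu_{k-1}+\cdots+\mu_2+2\mu_1$, giving $\a\to +1$). Your ``multiplication-by-$2$ carry recursion'' might be made to work, but it obscures this clean closed-form structure, and your instinct about which $h$ are extremal is pointed in the wrong direction for the $\liminf$.
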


\begin{proof} Let $h$ be an integer, $h \ge 2$. 
Let $x^* = x^*(h)$ be the root of the equation
\begin{equation}\label{Root}
h - 2 x - 1 - \log_2 (x + 1) =0,
\end{equation}
and let $j^* = \lceil x^* \rceil.$ Then 
$j^* = \min \{ j : j \ge 2^{h-2j-1} - 1 \}.$

First we develop the $\mu$-expansion of $2^h$ and
show that $\phi (2^h) \ge j^* +1$. To get started,
notice that $\mu_{h-2} \le 2^h < \mu_{h-1}$ and that
$2^h - \mu_{h-2} = 2^{h-2} + 1$. We can continue this process
of successive subtractions $j^*$ times, obtaining 
\[
2^h - \sum_{i=1}^{j^*} \mu_{h-2i} = 2^{h-2j^*} + j^*.
\]
At this point, the simple pattern of subtractions is disrupted;
the next subtraction is $\mu_{h-2j^*-1}$, which yields
\[
2^h - \mu_{h-2} - \cdots - \mu_{h-2j^*} - \mu_{h-2j^*-1} = A,
\]
where $A = j^*- \left(2^{h-2j^*-1} - 1\right).$ 
Thus $\phi(2^h) = j^* + 1 + \phi(A)$.

An analysis of equation (\ref{Root}) reveals that 
\[
j^*+ 1 \ge \frac{1}{2} (h+2) -\frac{\log_2(h+2)}{2}.
\]   
Likewise,
$A \le 2^{h-2j^*} + 2^{h-2j^*-1}$ hence $0 \le \phi(A) \le h-2j^*-1$ 
and we may conclude that
\[
(h+2) - \log_2 (h+2) \le 2 \phi (2^h) \le (h+2) + \log_2 (h+2),
\]
or $-1 \le \a (h) \le 1$.

We are left to prove the claims about the limits inferior and
superior of $\a(h)$. We consider two families
of trees.

\begin{enumerate}
\item\label{ThinTree} Let $h=h_k= 2^{k+1}+k-1$. Then $j^* = j^*_k = 2^k-1$ and
\[
2^h - \sum_{i=1}^{j^*} \mu_{h-2i} = \mu_{h-2j^*-1}.
\]
Accordingly, $\phi (2^h) = j^* + 1 = 2^k$, and
$\lim_{k \to \infty} \a (h_k) = -1,
$
which shows $\liminf_{h \to \infty} \a (h) = -1$.
\item\label{ThickTree} Let $h=h_k= 2^{k+2}-k$. Then $j^* = j^*_k = 2^{k+1}-k$
and 
\begin{align*}
2^h - \sum_{i=1}^{j^*} \mu_{h-2i}
	&=  2^k + (2^{k+1}-k)		\\
	&=  \mu_{k-1} + \mu_{k-2} + \cdots + \mu_2 + 2\mu_1.
\end{align*}
Thus $\phi (2^h) = j^* + k = 2^{k+1}$
and $\lim_{k \to \infty} \a (h_k) = 1$, which shows that
$\limsup_{h \to \infty} \a (h) = 1$.
\end{enumerate}
Our proof is complete.
\end{proof}

By combining equations (\ref{Asymptotics}) and (\ref{AlphaSequence}),
we find that 
\[
k(h) = \frac{1}{3}(2^h) - \frac{1}{3}(h+2) 
	- \frac{1}{3} \alpha (h) \log_2 (h+2) + O(1).
\]

The families of trees presented at the end of the proof
reveal some interesting examples of optimal pebblings.
In case (a), the first level that contains any pebbles is $k$
and thereafter, the pebbling configuration follows a regular,
alternating pattern. In other words, the top of the tree is
empty. In case (b), each of the levels 1 through $k-1$ contain pebbles
and thereafter the pebbling configuration follows a regular
alternating pattern.
In other words, the top of the tree, except for the root itself, is full.

Let levels 0 through $h-2j^*-2$ designate the \emph{top} of $T^h$
and let the remaining levels be called the \emph{bottom} of the
tree $T^h$. We can see that the bottom of the tree $T^h$ 
starts at approximately level 
\[
h- 2j^*-1 \sim \log_2(h/2+1).
\]
Our analysis reveals that an optimal pebbling of $T^h$ is
regular on the bottom: there are 2 or 4 pebbles at level $h-2j^*-1$
and 2 pebbles in each of the levels $h-2k$, $k \in \{1, \ldots, j^*\}$.
The top of $T^h$, however, may vary from full to empty.

\section{Connections with the Connolly-Fox sequence}
\label{metafib}

Recall from \S\ref{intro} that the Conolly-Fox sequence $\{c_n\}$ satisfies
the recurrence relation (\ref{ConollyFox})
with initial conditions $c(1)=1$ and $c(2)=2$.
We will prove the following theorem.

\begin{thm}\label{StoC} For each positive integer
$n$, $s_n = 4 c_n + n$.
\end{thm}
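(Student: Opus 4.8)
The plan is to prove $s_n = 4c(n) + n$ by strong induction on $n$, using the recursive structure of the list $A$ together with the nested recurrence (\ref{ConollyFox}) defining the Conolly-Fox sequence. The base cases $n=1,2$ are checked directly: $s_1 = 5 = 4\cdot 1 + 1 = 4c(1)+1$ and $s_2 = 10 = 4\cdot 2 + 2 = 4c(2)+2$.

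First I would establish a companion identity for the Conolly-Fox sequence that mirrors the ``doubling'' structure $A_{k+1} = \join[A_k, A_k, \{1\}]$; specifically, I expect $c(M_{k+1}) = c(M_k) + 2^{k-1}$ (equivalently, that $c$ at Mersenne indices matches the pattern forced by $s_{M_k} = \mu_k$ from Lemma \ref{ExpandS}, since $\mu_k = 4c(M_k) + M_k$ would give $c(M_k) = (\mu_k - M_k)/4 = (2\cdot 2^k + 2^k - 1 - 2^k + 1)/4 = 2^{k-1}$). So the claim at Mersenne indices reduces to showing $c(M_k) = 2^{k-1}$, which is a clean induction off (\ref{ConollyFox}) using $M_k - c(M_k - 1)$: one needs to identify $c(M_k - 1)$ and $c(M_k - 2)$ first, which suggests it is cleanest to prove a slightly stronger statement describing $c$ on a whole window near each $M_k$, or simply to transport the whole problem through the already-proven identity $s_{M_\ell + r} = s_{M_\ell} + s_r$ from the proof of Lemma \ref{ExpandS}.

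The main inductive step: given $n$, let $\ell = \max\{i : n \ge M_i\}$ and write $n = M_\ell + r$ with $0 \le r \le M_\ell$. From the proof of Lemma \ref{ExpandS} we already have $s_n = s_{M_\ell} + s_r = \mu_\ell + s_r$. By the induction hypothesis $s_r = 4c(r) + r$ (and $s_{M_\ell} = \mu_\ell = 4\cdot 2^{\ell-1} + M_\ell$), so $s_n = 4(2^{\ell-1} + c(r)) + (M_\ell + r) = 4(2^{\ell-1} + c(r)) + n$. Thus it remains exactly to show
\[
c(M_\ell + r) = 2^{\ell - 1} + c(r) \qquad (0 \le r \le M_\ell),
\]
which is the combinatorial heart of the argument and the step I expect to be the main obstacle. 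This should follow from unwinding the nested recurrence (\ref{ConollyFox}): one shows by induction on $r$ that $c(M_\ell + r - 1)$ and $c(M_\ell + r - 2)$ have the form $2^{\ell-1} + c(r-1)$ and $2^{\ell-1} + c(r-2)$, then feeds this into (\ref{ConollyFox}) and checks that the two argument shifts $n - c(n-1)$ and $n-1-c(n-2)$ land back inside the block $[M_{\ell-1}+1,\, M_\ell]$ or $[1, M_{\ell-1}]$ in a way that reproduces $c(r - c(r-1)) + c(r-1-c(r-2)) = c(r)$. The delicate point is controlling these index shifts at the block boundaries (e.g., $r=0$, $r = M_\ell$, and the internal boundary $r = M_{\ell-1}$), where the ``self-reference'' of the meta-Fibonacci recurrence is most sensitive; handling those boundary indices — ideally by proving the auxiliary facts $c(M_\ell) = 2^{\ell-1}$ and $c(M_\ell + 1) = 2^{\ell-1}+1$ up front — is what makes the induction close.
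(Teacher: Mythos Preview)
Your plan is sound in outline but takes a much harder road than the paper, and the part you flag as ``the combinatorial heart'' is left as a sketch rather than a proof. The paper does not touch the nested recurrence (\ref{ConollyFox}) at all. Instead it observes that the first-difference sequence $d_n = c_n - c_{n-1}$ of the Conolly-Fox sequence is known (OEIS A079559) to be built by the \emph{same} recursive list construction as $A$: set $D_1 = \{1\}$ and $D_k = \join[D_{k-1}, D_{k-1}, \{0\}]$. Comparing with $A_1 = \{5\}$ and $A_k = \join[A_{k-1}, A_{k-1}, \{1\}]$ gives the term-by-term identity $a_n = 4 d_n + 1$ immediately, and summing yields $s_n = 4 c_n + n$. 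That is the whole proof.

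By contrast, you aim to prove the block identity $c(M_\ell + r) = 2^{\ell-1} + c(r)$ directly from (\ref{ConollyFox}). This identity is true (indeed it drops out of the paper's difference-sequence description, since $D_\ell$ contains exactly $2^{\ell-1}$ ones), but deriving it from the self-referential recurrence requires exactly the boundary bookkeeping you yourself identify as delicate: you need to know that the shifted arguments $n - c(n-1)$ and $n-1 - c(n-2)$ land in the correct blocks, which in turn requires control of $c$ at $M_\ell$, $M_\ell \pm 1$, and the internal boundary $M_{\ell-1}$, and this has to be established simultaneously with the induction. You have not actually carried this out; you have only noted that it ``should follow.'' So as written the proposal has a genuine gap at its central step. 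The cleanest fix is not to push harder on the recurrence but to adopt the paper's viewpoint: once you know $\{d_n\}$ is the difference sequence of $\{c_n\}$, the block identity for $c$ and the theorem itself are one-line consequences.
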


\begin{proof}
We begin by defining a list of numbers composed entirely of 0s and 1s.
Let $D_1 = \{1\}$. We define successive lists recursively:
for each integer $k$, $k \ge 2$, let
$D_k = \join[D_{k-1}, D_{k-1}, \{0\}].$
The lists $D_1$ through $D_4$ are collected in Table \ref{DSequence}.

\begin{table}[htb]
\caption{The lists $D_1$ through $D_4$}
\begin{tabular}{r l}\toprule
$k$	& $D_k$	\\	\hline
1	&\{1\} 	\\
2	&\{1, 1, 0\} 	\\
3	&\{1, 1, 0, 1, 1, 0, 0\} \\
4	&\{1, 1, 0, 1, 1, 0, 0, 1, 1, 0, 1, 1, 0, 0, 0\} \\
\bottomrule
\end{tabular}
\label{DSequence}
\end{table}

Let $D$ be the limit of this sequence of lists and, for each positive integer $n$, let $d_n$ denote the $n$th element of $D$. 
The sequence $\{d_n\}$ is the sequence of differences in the 
Conolly-Fox sequence; see \href{https://oeis.org/A079559}{OEIS, A079559}
\cite{OEIS}.
Thus, for each positive integer $n$, $c_n = d_1 + \cdots+ d_n$.
The initial terms of the sequences $\{d_n\}$ and $\{c_n\}$
are presented in Table \ref{PartialSums2}. 

\begin{table}[htb]
\caption{The initial terms of $\{d_n\}$ and $\{c_n\}$}
\begin{tabular}{rrrrrrrrrrrrrrrr} \toprule
$n$	&1 & 2 & 3 & 4 & 5 & 6 & 7 & 8 & 9 &
   10 & 11 & 12 & 13 & 14 & 15 \\
$d_n$	&1 & 1 & 0 & 1 & 1 & 0 & 0 & 1 & 1 &
   0 & 1 & 1 & 0 & 0 & 0 \\
$c_n$ 	&1 & 2 & 2 & 3 & 4 & 4 & 4 & 5 & 6 &
   6 & 7 & 8 & 8 & 8 & 8 \\
\bottomrule
\end{tabular} 
\label{PartialSums2}
\end{table}

Recall the sequence $\{a_n\}$ defined in \S1; see Table \ref{PartialSums}.
For each positive integer $n$, it is easy to see that 
$a_n = 4 d_n +1$ and therefore $s_n = 4 c_n + n$, as was to be shown.
\end{proof}

\bibliography{TreePebbles}        

\end{document}